\theoremstyle{plain}
\newtheorem{theorem}{Theorem}
\newtheorem{lemma}{Lemma}
\newtheorem{corollary}{Corollary}
\theoremstyle{definition}
\begin{document}

\title[Level sets of asymptotic mean of digits function]
    {Level sets of asymptotic mean of digits function for $4$-adic representation of real number}

\author{M. V. Pratsiovytyi}
\address{National Pedagogical Dragomanov University, 9 Pirogova, Kyiv, 01601, Ukraine;
Institute of Mathematics, National Academy of Sciences of Ukraine,
3 Tereshchenkivs'ka, Kyiv, 01601, Ukraine}
\email{prats4444@gmail.com}

\author{S. O. Klymchuk}
\address{Institute of Mathematics, National Academy of Sciences of Ukraine,
3 Tereshchenkivs'ka, Kyiv, 01601, Ukraine}
\email{svetaklymchuk@gmail.com}

\author{O. P. Makarchuk}
\address{Institute of Mathematics, National Academy of Sciences of Ukraine,
3 Tereshchenkivs'ka, Kyiv, 01601, Ukraine}
\email{makolpet@gmail.com}

\subjclass[2010]{Primary 28A80; Secondary 60G30}
\date{04/09/2014;\ \  Revised 20/10/2015}
\keywords{$s$-Adic representation, asymptotic mean of digits
function, level sets,  frequency of digits, Besicovitch--Eggleston
sets, Hausdorff--Besico\-vitch dimension.}

\begin{abstract}
  We study topological, metric and fractal properties of the level
  sets
$$S_{\theta}=\{x:r(x)=\theta\}$$ of the function $r$ of asymptotic
mean of digits of a number $x\in[0;1]$ in its $4$-adic representation,
$$r(x)=\lim\limits_{n\to\infty}\frac{1}{n}\sum\limits^{n}_{i=1}\alpha_i(x)$$
if the asymptotic frequency $\nu_j(x)$ of at least one digit does not
exist, were $$ \nu_j(x)=\lim_{n\to\infty}n^{-1}\#\{k: \alpha_k(x)=j,
k\leqslant n\}, \:\: j=0,1,2,3. $$
\end{abstract}

\maketitle

\section{Introduction}
Let $2\leqslant s\in N$ and $\mathcal{A}_s=\{0,1,\ldots,s-1\}$ be an
alphabet of $s$-adic number system. By
$\Delta^s_{\alpha_1(x)\alpha_2(x)\ldots\alpha_k(x)\ldots}$ denote the
$s$-adic representation of a number $x\in[0;1]$, i.e.,
$$
 x=\displaystyle\frac{\alpha_1}{s}+
   \displaystyle\frac{\alpha_2}{s^2}+\cdots+
   \displaystyle\frac{\alpha_n}{s^n}+\cdots\equiv\Delta^s_{\alpha_1\alpha_2\ldots\alpha_k\ldots} ,
$$
where $\mathcal{A}_s\ni\alpha_k=\alpha_k(x)$ is the \emph{$k$th $s$-adic digit of the number} $x$.
Note that some numbers  have two $s$-adic representations such that
$$
\Delta^s_{c_1\ldots c_{k-1}c_k(0)}=\Delta^s_{c_1\ldots c_{k-1}[c_k-1](s-1)},
$$ 
where $(i)$ is a period in the number representation. These numbers
are called \emph{$s$-adic--rational}. The rest of numbers have unique
representations and are called \emph{$s$-adic--irra\-tional.}  The $k$th
digit of the number, as its function, is well defined after agreement
to use the first $s$-adic representation only, i.e., the
representation with period $(0)$.

\emph{An asymptotic mean} (or simply \emph{mean}) \emph{of digits of
  the number $x$} is a value $r(x)$ such that $$
\lim\limits_{n\to\infty}\frac{1}{n}\sum\limits^{n}_{i=1}\alpha_i(x)\equiv
r(x), $$ (if the limit exists), where $\mathcal{A}_s\ni\alpha_i$ are
digits of the $s$-adic representation of the number $x\in[0;1]$. The
value $n^{-1}\sum\limits^{n}_{i=1} \alpha_i(x)\equiv r_n(x)$ is called
\emph{relative mean of digits} in the $s$-adic representation of~$x$.

In this paper we study properties of the function $r$ of asymptotic
mean of digits, in particular, topological, metric, and fractal
properties of number sets with a preassigned \emph{asymptotic mean of
  digits}. Namely, we investigate sets $$ S_{\theta}\equiv\left\{
  x:\lim_{n\to\infty}\frac{1}{n}\sum^{n}_{i=1}\alpha_i(x)=\theta\in
  [0; s-1] \right\}, $$ that are level sets of the function $r$ (indeed,
$r^{- 1}(\theta) = S_{\theta}$).  If $\theta \not \in [0;s-1]$ then it
is easily proved that the set $S_{\theta}$ is empty.

Asymptotic mean of digits of a number $x$ is closely related to the
concept of digit frequency of the number.

Let $N_i(x,k)$ be the number the digits ``$i$''$\in\mathcal{A}_s$
appears in the $s$-adic representation
$\Delta^s_{\alpha_1\alpha_2\ldots\alpha_k\ldots}$ of the real number
$x\in[0;1]$ to $k$th place including, i.e.
$$
N_i(x,k)=\# \{j:\, \alpha_j (x)=i, \, j\leqslant k\}.
$$

The \emph{frequency (asymptotic frequency) of a digit ``$i$''} in the $s$-adic
representation of a number $x\in[0;1]$ is the limit (if it exists) such
that
$$
\nu_i(x) =\lim\limits_{k\to\infty} v_i^{(k)},
$$
where $v_i^{(k)}= k^ {- 1} N_i(x, k)$ is called \emph{relative
  frequency of the digit ``$i$''} in the $s$-adic representation of a number
$x$.

The frequency function $\nu_i(x)$ of a digit ``$i$'' in the $s$-adic
representation of a number $x\in[0;1]$ is well defined for
$s$-adic--irrational numbers, and, for $s$-adic--rational numbers, it is
well defined after agreement to use representation with period $(0)$
only.

Different mathematical objects with fractal properties were defined
and studied in terms of frequencies. First of all it is
Besicovitch--Eggleston's sets \cite{Besic2, Egg1} $$
E[\tau_0,\tau_1,\ldots,\tau_{s-1}]=\{x:x=\Delta^s_{\alpha_1\alpha_2\ldots\alpha_k\ldots},\,\nu_i(x)=\tau_i\geqslant0,\,\,i=\overline{0,s-1}\},
$$ the Hausdorff--Besicovitch dimension of the sets is equal to
\cite{Bill} to $$ \alpha_0 (E[\tau_0, \tau_1, \ldots, \tau_{s-1}])
= - \frac{\ln \tau_0^{\tau_0}\tau_1^{\tau_1} \ldots
\tau_{s-1}^{\tau_{s-1}}} {\ln s}. $$

 The number $x$ is called
\emph{normal for basis $s$} if the value $\nu_i(x)$ exists  for all
$i\in\mathcal{A}_s$ and equals to $s^{- 1}$. The set of all normal
for  basis $s$ numbers is  the only Besicovitch--Eggleston's  set
of positive and even full Lebesgue measure.

Normal for every natural basis $s \geqslant 2 $ number $x$ is called
\emph{normal.}  According to the famous Borel's theorem \cite{Bor1} we
see that \emph{Lebesgue measure of the set of normal numbers is equal
  to 1.}

In the papers \cite{AlPrTor}, \cite{PrTorb} it was proved that the
Hausdorff--Besicovitch dimension of abnormal and essentially abnormal
number sets (i.e. number sets having not frequency of at least one
digit or having not frequencies of all digits respectively) is equal
to $1$.

If a number $x$ has all digits frequencies then the relationship between
asymptotic mean of digits and digits frequencies of the number $x$ is the
following: $$ r(x) = \nu_1 (x) +2 \nu_2 (x) + \cdots + (s-1) \nu_{s-1}
(x). $$

When $s = 2$, it is obvious that the asymptotic mean of digits is equal to
the frequency of digit ``1''. So we do not examine this case. The case
$s=3$ was studied in papers \cite{My2,My4}.  It is unique since it is
the only case where the set $S_{\theta}$ is a union of two disjoint sets
$ \Theta_1 $ and $ \Theta_2 $ such that
$$
\begin{array}{ll}
\Theta_1&\equiv\left\{x:\text{frequencies of all digits exist}\right\},\\
\Theta_2&\equiv\left\{x:\text{frequency of any digit does not exist}\right\}.
\end{array}
$$
In cases $s>3$ the set $S_{\theta}$ is a union of tree disjoint sets  such that
$$
\begin{array}{ll}
\!\Theta_1&\!\!\!\!\!\equiv\!\left\{x\!:\text{frequencies of all digits exist}\right\},\\
\!\Theta_2&\!\!\!\!\!\equiv\!\left\{x\!:\text{frequency  of at least one digit exists and of at least one digit does not exist}\right\}\!,\\
\!\Theta_3&\!\!\!\!\!\equiv\!\left\{x\!:\text{frequency of any digit does not exist}\right\}.
\end{array}
$$  

In this paper we study the case $s=4$ since it is the easiest and
modeling in the last class.  Our previous paper \cite{My3} was
devoted to studying properties of the set $\Theta_1$, and this one
deals with the sets $\Theta_2$ and $\Theta_3$.

\section{The object of study}
\begin{lemma}\label{lem1}
  If in the $4$-adic representation of a real number $x\in[0;1]$ the
  frequency of one digit does not exist, then the frequency of at least
  one more digit does not exist.
\end{lemma}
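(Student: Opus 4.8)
The plan is to argue by contradiction, exploiting the fact that the four relative frequencies sum to $1$ at every finite level. First I would record that, for each $k$, every one of the first $k$ digits of $x$ equals exactly one element of $\mathcal{A}_4=\{0,1,2,3\}$ — here the agreement to use the representation with period $(0)$ is what makes the digit functions $\alpha_j(x)$, and hence the counts $N_i(x,k)$, well defined — so that
$$
N_0(x,k)+N_1(x,k)+N_2(x,k)+N_3(x,k)=k,
$$
and, dividing by $k$,
$$
v_0^{(k)}+v_1^{(k)}+v_2^{(k)}+v_3^{(k)}=1 \qquad \text{for all } k.
$$

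Next, suppose to the contrary that exactly one digit fails to have a frequency; relabel so that this digit is $j\in\{0,1,2,3\}$. Then for each of the three indices $i\neq j$ the limit $\nu_i(x)=\lim_{k\to\infty}v_i^{(k)}$ exists. From the displayed identity,
$$
v_j^{(k)}=1-\sum_{i\neq j}v_i^{(k)},
$$
and the right-hand side is a finite linear combination of convergent sequences, hence converges to $1-\sum_{i\neq j}\nu_i(x)$ as $k\to\infty$. Therefore $\lim_{k\to\infty}v_j^{(k)}$ exists, i.e. $\nu_j(x)$ exists, contradicting the choice of $j$. Since by hypothesis the set of digits without a frequency is nonempty and we have just shown it cannot be a singleton, it contains at least two digits, which is the assertion of the lemma.

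There is essentially no hard step here: the argument rests only on the elementary fact that, in a finite sum of sequences all but one of which converge, the remaining one converges if and only if the sum does. The one point deserving a word of care is the normalization $\sum_i v_i^{(k)}=1$, which is why I would be explicit about invoking the fixed choice of $s$-adic representation, so that each position contributes to exactly one of the counts $N_i(x,k)$. Once that is in place the conclusion is immediate; in fact the same reasoning shows more generally that, for any base $s$, the set of digits lacking a frequency can never have exactly one element.
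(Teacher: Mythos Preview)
Your proof is correct and is essentially the paper's own argument: both rest on the identity $v_0^{(k)}+v_1^{(k)}+v_2^{(k)}+v_3^{(k)}=1$ and the elementary fact that if all but one term in a finite sum of sequences converge then so does the remaining one. The only difference is cosmetic --- you frame it as a contradiction while the paper argues directly --- but the logical content is identical.
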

\begin{proof}
  Suppose the frequency $\nu_k(x_0)$ does not exist,
  i.e., $\lim\limits_{n\to\infty}\frac{N_k(x_0,n)}{n}$ does not
  exist. Since
$$
\frac{N_k(x_0,n)}{n}=1-\frac{N_j(x_0,n)}{n}-\frac{N_m(x_0,n)}{n}-\frac{N_l(x_0,n)}{n},
$$
we see
that $$\lim\limits_{n\to\infty}\left(\frac{N_j(x_0,n)}{n}+\frac{N_m(x_0,n)}{n}+\frac{N_l(x_0,n)}{n}\right)$$
does not exist. It means that at least one of the limits
$\lim\limits_{n\to\infty}\frac{N_{j}(x_0,n)}{n}$,
$\lim\limits_{n\to\infty}\frac{N_{m}(x_0,n)}{n}$ or
$\lim\limits_{n\to\infty}\frac{N_l(x_0,n)}{n}$, where
$\{j,k,l,m\}=\{0,1,2,3\}$ do not exist.
\end{proof}

\begin{lemma} \label{lem2} If in the $4$-adic representation of a real
  number $x\in[0;1]$ the asymptotic mean of digits,  $r(x)$, and at least two
  $4$-adic digits frequencies $\nu_i(x)$, $\nu_j(x)$, where
  $i,j\in\{0,1,2,3\}$, exist, then the remaining two $4$-adic digits
  frequencies of the number $x$ exist.
\end{lemma}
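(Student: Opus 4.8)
The plan is to reduce everything to a system of linear equations in the four frequencies. Suppose, say, that $\nu_i(x)$ and $\nu_j(x)$ exist, and let $\{i,j,k,l\}=\{0,1,2,3\}$ be the remaining labels. First I would write down the two relations that are always available: the normalization
$$
\frac{N_0(x,n)}{n}+\frac{N_1(x,n)}{n}+\frac{N_2(x,n)}{n}+\frac{N_3(x,n)}{n}=1
$$
and the definition of the relative mean
$$
r_n(x)=\frac{1}{n}\sum_{t=1}^{n}\alpha_t(x)=\frac{N_1(x,n)}{n}+2\,\frac{N_2(x,n)}{n}+3\,\frac{N_3(x,n)}{n}.
$$
Passing to the limit in the first relation is legitimate because $\nu_i(x)$ and $\nu_j(x)$ exist by hypothesis, so $\tfrac{N_k(x,n)}{n}+\tfrac{N_l(x,n)}{n}\to 1-\nu_i(x)-\nu_j(x)=:A$. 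Passing to the limit in the second is legitimate because $r(x)$ exists by hypothesis and the two known-frequency terms converge, so the remaining combination (a fixed linear combination with coefficients drawn from $\{1,2,3\}$ of $\tfrac{N_k(x,n)}{n}$ and $\tfrac{N_l(x,n)}{n}$) also converges, to some constant $B$.

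So I would then have a $2\times 2$ linear system
$$
\begin{cases}
\dfrac{N_k(x,n)}{n}+\dfrac{N_l(x,n)}{n}\to A,\\[2mm]
c_k\,\dfrac{N_k(x,n)}{n}+c_l\,\dfrac{N_l(x,n)}{n}\to B,
\end{cases}
$$
where $c_k,c_l\in\{0,1,2,3\}$ are the digit values of the two unknown digits and, crucially, $c_k\neq c_l$ since $k\neq l$ are distinct digits. Because the coefficient matrix $\bigl(\begin{smallmatrix}1&1\\ c_k&c_l\end{smallmatrix}\bigr)$ has determinant $c_l-c_k\neq 0$, it is invertible, hence the individual sequences $\tfrac{N_k(x,n)}{n}$ and $\tfrac{N_l(x,n)}{n}$ are forced to converge — they are fixed linear combinations of two convergent sequences. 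Their limits are exactly $\nu_k(x)$ and $\nu_l(x)$, which therefore exist. This proves the lemma.

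The only place that needs a moment's care — and what I expect to be the main (mild) obstacle — is the invertibility step: one must note that the special structure of the $4$-adic mean, with digit weights $0,1,2,3$ all \emph{distinct}, is what makes the determinant nonzero. (This is why the statement asks for \emph{at least two} known frequencies and why it is specific to this weighting; with a degenerate weight pattern the argument would fail.) Everything else is routine: it is just the observation that if two of the four nonnegative sequences $\tfrac{N_\bullet(x,n)}{n}$ have limits and two prescribed linear combinations of all four have limits, then by elimination all four have limits. I would present it compactly, solving the $2\times 2$ system explicitly for $\nu_k(x)$ and $\nu_l(x)$ in terms of $A$, $B$, $c_k$, $c_l$, and then remarking that the resulting expressions are the limits of the corresponding relative frequencies.
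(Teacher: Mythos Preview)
Your proof is correct and follows essentially the same approach as the paper: both set up the two linear relations (normalization and the expression for $r_n$) and solve for the two unknown relative frequencies as linear combinations of convergent sequences. The only difference is cosmetic---the paper splits into four cases depending on which pair $\{i,j\}$ is known and writes out each $2\times 2$ solution explicitly, whereas you handle all cases at once by observing that the coefficient matrix $\bigl(\begin{smallmatrix}1&1\\ c_k&c_l\end{smallmatrix}\bigr)$ has determinant $c_l-c_k\neq 0$.
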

\begin{proof}
Consider the system
\begin{equation}
  \label{*}
  \left\{
    \begin{aligned}
      &v^{(n)}_0 + v^{(n)}_1 + v^{(n)}_2 + v^{(n)}_3 = 1,\\
      &v^{(n)}_1 + 2v^{(n)}_2 + 3v^{(n)}_3 = r_n.
    \end{aligned}
  \right.
\end{equation}

Let $i,j\in\{1,2,3\}$. Since
$\lim\limits_{n\to\infty}v^{(n)}_i=\nu_i(x)$,
$\lim\limits_{n\to\infty}r_n=\theta$, we see that from the second
equation of system \eqref{*} it follows that
$\lim\limits_{n\to\infty}v^{(n)}_k$, $k\in\{1,2,3\}\setminus\{i,j\}$,
exists, i.e. the frequency $\nu_k(x)$ exists. Then from the first
equation of system \eqref{*} it follows that $\nu_0(x)$ exists.

Let $i=0$, $j=1$. Then from system \eqref{*} we have
$$\left\{
    \begin{aligned}
&v^{(n)}_3=r_n+v^{(n)}_1-2v^{(n)}_0-2,\\
&v^{(n)}_2=1-v^{(n)}_0-v^{(n)}_1-v^{(n)}_3=3+v^{(n)}_0-2v^{(n)}_1-r_n,
\end{aligned}
  \right. $$
which implies existence of the frequencies $\nu_2(x)$ and $\nu_3(x)$.

Let $i=0$, $j=2$. Then from system \eqref{*} we obtain
$$\left\{
    \begin{aligned}
&v^{(n)}_3=\frac{1}{2}(r_n-v^{(n)}_2+v^{(n)}_0-1),\\
&v^{(n)}_1=1-v^{(n)}_0-v^{(n)}_2-v^{(n)}_3=\frac{3}{2}-\frac{3}{2}v^{(n)}_0-\frac{3}{2}v^{(n)}_2-\frac{r_n}{2},
\end{aligned}
  \right.$$
which implies existence of the frequencies $\nu_1(x)$ and $\nu_3(x)$.

\pagebreak
Let $i=0$, $j=3$. Then from system \eqref{*} we have
$$\left\{
    \begin{aligned}
&v^{(n)}_2=r_n-2v^{(n)}_3+v^{(n)}_0-1,\\
&v^{(n)}_1=1-v^{(n)}_0-v^{(n)}_2-v^{(n)}_3=2-r_n-2v^{(n)}_0+2v^{(n)}_3,
\end{aligned}
  \right.$$
which implies existence of the frequencies $\nu_1(x)$ and $\nu_2(x)$.
\end{proof}

\begin{corollary}{\rm{(from Lemmas 1 and 2)}}.  A number $x\in
  S_{\theta}$ can not have frequencies of only two or of only three
  $4$-adic digits.
\end{corollary}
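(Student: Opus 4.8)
The plan is to deduce the statement directly from Lemmas~\ref{lem1} and~\ref{lem2} by a short argument by contradiction, treating separately the configuration ``exactly two digit frequencies exist'' and the configuration ``exactly three digit frequencies exist.''

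First I would dispose of the ``exactly two'' case. Suppose $x\in S_\theta$ and that precisely two of the four frequencies $\nu_0(x),\nu_1(x),\nu_2(x),\nu_3(x)$ exist, say $\nu_i(x)$ and $\nu_j(x)$ with $i\neq j$. Since $x\in S_\theta$, the asymptotic mean $r(x)=\theta$ exists, so the hypotheses of Lemma~\ref{lem2} are met: $r(x)$ together with at least two digit frequencies exist. Lemma~\ref{lem2} then yields existence of the remaining two frequencies, hence all four exist, contradicting the assumption that only two do.

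Next I would treat the ``exactly three'' case. Assume $x\in S_\theta$ and that precisely three frequencies exist. Again this falls under Lemma~\ref{lem2} (three is more than two), which forces the fourth frequency to exist as well, a contradiction. Alternatively, ``exactly three frequencies exist'' means exactly one frequency fails to exist, but Lemma~\ref{lem1} guarantees that a single missing frequency is always accompanied by at least one further missing one, so at least two must fail --- again a contradiction.

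Combining the two cases gives the corollary. I do not anticipate any real obstacle: all the substance is already contained in Lemmas~\ref{lem1} and~\ref{lem2}, and the corollary is essentially the bookkeeping remark that, inside $S_\theta$, the only admissible possibilities are ``all four frequencies exist'' (the part lying in $\Theta_1$), ``exactly one frequency exists'' (the part lying in $\Theta_2$), and ``no frequency exists'' (the part lying in $\Theta_3$); in particular it justifies the decomposition $S_\theta=\Theta_1\cup\Theta_2\cup\Theta_3$ announced in the Introduction.
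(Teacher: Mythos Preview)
Your proposal is correct and essentially identical to the paper's own argument: the paper handles the ``exactly three'' case via Lemma~\ref{lem1} (one missing frequency forces a second missing one) and the ``exactly two'' case via Lemma~\ref{lem2} (two existing frequencies plus $r(x)$ force all four), which is precisely what you do, including offering the Lemma~\ref{lem2} alternative for the three-frequency case.
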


From Lemma \ref{lem1} it follows that if a number does not have
frequency of at least one $4$-adic digit then it does not have
frequency of one more digit, therefore, the number $x\in S_{\theta}$
can not have frequencies of only three digits. According to Lemma
\ref{lem2}, if a number $x\in S_{\theta}$ has frequencies of at least
two digits then it has frequencies of all digits, therefore, the number $x$
can not have frequencies of only two $4$-adic digits.

Hence, the set $S_\theta$ can be represented as a union of three disjoint sets
$\Theta_1$, $\Theta_2$ and $\Theta_3$ such that
$$
\begin{array}{ll}
\Theta_1&\equiv\left\{x:\nu_i(x)\text{  exist}, \forall i\in\mathcal{A}_4\right\},\\
\Theta_2&\equiv\left\{x:\text{exist frequency of only one 4-adic digit}~\nu_i(x), i\in\mathcal{A}_4\right\},\\
\Theta_3&\equiv\left\{x:\nu_i(x)\text{  do not exist}, \forall i\in\mathcal{A}_4\right\}.
\end{array}
$$

In the following sections we study properties of the sets $\Theta_2$ and
$\Theta_3$.

\section{Abnormal numbers that have asymptotic mean of digits}

\begin{theorem}
If $\theta=0$ or $\theta=3$, then $\Theta_2= \Theta_3=\varnothing$.
\end{theorem}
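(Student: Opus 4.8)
The plan is to observe that the claim reduces to a statement about extreme values of the relative mean of digits. Suppose $\theta = 0$, so that $r_n(x) = n^{-1}\sum_{i=1}^n \alpha_i(x) \to 0$. Since every digit $\alpha_i(x) \in \{0,1,2,3\}$ is a nonnegative integer, we have $\alpha_i(x) \geqslant [\alpha_i(x) \neq 0]$, and hence
$$
r_n(x) = \frac{1}{n}\sum_{i=1}^n \alpha_i(x) \geqslant \frac{1}{n}\,\#\{i \leqslant n : \alpha_i(x) \neq 0\} = 1 - v^{(n)}_0(x) \geqslant 0.
$$
First I would deduce from $r_n(x)\to 0$ that $v^{(n)}_0(x) \to 1$, so the frequency $\nu_0(x)$ exists and equals $1$. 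Then the elementary bound $0 \leqslant v^{(n)}_j(x) \leqslant 1 - v^{(n)}_0(x) \to 0$ for $j \in \{1,2,3\}$ forces $\nu_j(x) = 0$ for every $j$, so in fact $x \in \Theta_1$; consequently $\Theta_2 = \Theta_3 = \varnothing$ when $\theta = 0$.

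Next I would treat $\theta = 3$ by the symmetric argument. Writing $\beta_i(x) = 3 - \alpha_i(x) \in \{0,1,2,3\}$, one has $\frac{1}{n}\sum_{i=1}^n \beta_i(x) = 3 - r_n(x) \to 0$, so the previous paragraph applies verbatim to the digits $\beta_i$: the frequency of $\beta_i = 0$, i.e. of $\alpha_i = 3$, exists and equals $1$, while the frequencies of $\alpha_i = 0,1,2$ all exist and equal $0$. Again $x \in \Theta_1$, so $\Theta_2 = \Theta_3 = \varnothing$ when $\theta = 3$.

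I do not anticipate a serious obstacle here: the entire content is the inequality $r_n(x) \geqslant 1 - v^{(n)}_0(x)$ (and its mirror image at $\theta = 3$), together with a squeeze. If one prefers to lean on the results already established in the excerpt, one can instead argue as follows: by the Corollary, $x \in S_\theta$ can only lie in $\Theta_1$, $\Theta_2$, or $\Theta_3$, and $\Theta_2, \Theta_3$ consist of numbers lacking the frequency of at least one digit; so it suffices to show that membership in $S_0$ (resp.\ $S_3$) forces all four digit frequencies to exist, which is exactly what the squeeze above delivers. The only point to be careful about is the boundary convention for $s$-adic--rational numbers, but since we use the representation with period $(0)$ throughout, the functions $v^{(n)}_j$ and $r_n$ are well defined and the estimates go through unchanged.
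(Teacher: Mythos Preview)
Your proof is correct and follows essentially the same approach as the paper: a squeeze argument showing that all four digit frequencies exist (and are $0$ or $1$) whenever $r_n\to 0$ or $r_n\to 3$. The only cosmetic differences are that the paper bounds $v^{(n)}_i\leqslant r_n$ directly for $i\in\{1,2,3\}$ (rather than first establishing $v^{(n)}_0\to 1$), and for $\theta=3$ it writes out the linear combination $3-r_n=3v^{(n)}_0+2v^{(n)}_1+v^{(n)}_2$ explicitly instead of invoking the digit reflection $\beta_i=3-\alpha_i$; these are two ways of expressing the same identity.
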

\begin{proof}
  Let $\theta=0$. If $\lim\limits_{n\to\infty}r_n(x)=0$, then for any
  $i\in\{1,2,3\}$ the following inequality holds: $0\leqslant
  v^{(n)}_i(x)\leqslant
  v^{(n)}_1(x)+2v^{(n)}_2(x)+3v^{(n)}_3(x)=r_n(x)\to 0$, as
  $n\to\infty$. Therefore,
  $\nu_i(x)=\lim\limits_{n\to\infty}v^{(n)}_i(x)=0$ and
  $\nu_0(x)=1$. Hence, $\Theta_2=\Theta_3=\varnothing$.

  Let $\theta=3$. If $\lim\limits_{n\to\infty}r_n(x)=3$, then
  multiplying the first equation of system \eqref{*} by $3$ and subtracting the
  second equation of the system, we obtain that
  $3v^{(n)}_0+2v^{(n)}_1+v^{(n)}_2=3-r_n$. Hence $0\leqslant
  v^{(n)}_i(x)\leqslant
  3v^{(n)}_0(x)+2v^{(n)}_1(x)+v^{(n)}_2(x)=3-r_n(x)\to 0$ as
  $n\to\infty$. Therefore, $\nu_i(x)=0$, for all
  $i\in\{0,1,2\}$. Hence $\nu_3(x)=1$ and $\Theta_2=
  \Theta_3=\varnothing$.
\end{proof}

\begin{lemma} \label{teo4} Let $(s_k)$ be a sequence of positive
  integers and the following conditions hold: $
  \lim\limits_{k\to\infty}s_k=\infty $, $
  \lim\limits_{k\to\infty}\displaystyle\frac{k}{\sum\limits^{k}_{i=1}s_i}=0
  $, $\alpha_1,\alpha_2,\beta_1,\beta_2\geqslant0$,
  $\alpha_1\neq\alpha_2$, $\beta_1\neq\beta_2$.  Then there exist
  sequences $a_n(\alpha_1,\alpha_2)$ and $b_n(\beta_1,\beta_2)$ such
  that $a_n(\alpha_1,\alpha_2)\in\{\alpha_1,\alpha_2\}$ and
  $b_n(\beta_1,\beta_2)\in\{\beta_1, \beta_2\}$ for all $n\in N$ and the
  limits 
$$\lim\limits_{k\to\infty}\frac{\sum\limits^{k}_{i=1}[a_i(\alpha_1,\alpha_2)\cdot
  s_i]} {\sum\limits^{k}_{i=1}s_i}
\quad\text{and}\quad
  \lim\limits_{k\to\infty}\frac{\sum\limits^{k}_{i=1}[b_i(\beta_1,\beta_2)\cdot s_i]}
                               {\sum\limits^{k}_{i=1}s_i}$$ do not exist.
\end{lemma}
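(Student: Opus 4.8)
The plan is to build the sequence $a_n(\alpha_1,\alpha_2)$ (the construction of $b_n$ being identical) so that the running average
$$
R_k=\frac{\sum_{i=1}^{k}a_i(\alpha_1,\alpha_2)\,s_i}{\sum_{i=1}^{k}s_i}
$$
oscillates between a value near $\alpha_1$ and a value near $\alpha_2$ infinitely often, which forces the limit not to exist. Writing $\sigma_k=\sum_{i=1}^k s_i$, the key elementary observation is that a single block of indices on which $a_i$ is constantly equal to $\alpha$ changes the weighted average towards $\alpha$, and because $\sigma_k\to\infty$ the ratio $s_k/\sigma_k$ that a single new term contributes tends to $0$; so the average moves continuously (in small steps) and we have full control over where it ends up after a sufficiently long block.

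I would proceed greedily in stages. Start with $a_i=\alpha_1$ until the first time the partial average $R_k$ comes within $\tfrac14|\alpha_1-\alpha_2|$ of $\alpha_1$; then switch to $a_i=\alpha_2$ and keep it there until $R_k$ comes within $\tfrac14|\alpha_1-\alpha_2|$ of $\alpha_2$; then switch back, and so on. Two things must be checked. First, that each stage terminates: if we use the value $\alpha$ on all indices from some point $m$ onward, then
$$
R_k=\frac{\sum_{i=1}^{m-1}a_i s_i+\alpha\sum_{i=m}^{k}s_i}{\sigma_k}
   =\alpha+\frac{\sum_{i=1}^{m-1}(a_i-\alpha)s_i}{\sigma_k},
$$
and since $\sigma_k\to\infty$ while the numerator of the correction term is fixed, $R_k\to\alpha$; hence the target neighbourhood is reached in finitely many steps. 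Second, that the switch does not immediately ``overshoot'' past the midpoint in one step — this is exactly where the hypothesis $\lim_{k\to\infty}k/\sigma_k=0$ (equivalently $s_k/\sigma_k\to 0$, after noting $s_k\le \sigma_k$ and the growth it forces) is used: for $k$ large the one-term change $|R_k-R_{k-1}|\le \dfrac{(\,|\alpha_1|+|\alpha_2|\,)s_k}{\sigma_k}$ is smaller than $\tfrac14|\alpha_1-\alpha_2|$, so the average cannot jump across both quarter-neighbourhoods at once. Consequently $R_k$ returns to within $\tfrac14|\alpha_1-\alpha_2|$ of $\alpha_1$ and of $\alpha_2$ infinitely often, so $\liminf_k R_k\le \alpha_1+\tfrac14|\alpha_1-\alpha_2|<\alpha_2-\tfrac14|\alpha_1-\alpha_2|\le\limsup_k R_k$ (after relabelling so that $\alpha_1<\alpha_2$), and the limit does not exist.

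The same argument verbatim, with $\beta_1,\beta_2$ in place of $\alpha_1,\alpha_2$, produces $b_n(\beta_1,\beta_2)$, and the two constructions are carried out independently, so both stated limits fail to exist. The main obstacle is the bookkeeping in the second point above: one must make the initial segment of each stage (before $k$ is ``large enough'' for the small-step estimate to apply) harmless, which is handled by only starting to count a stage as ended once $k$ exceeds a threshold $K_0$ depending on $\alpha_1,\alpha_2$ beyond which $s_k/\sigma_k<\tfrac14|\alpha_1-\alpha_2|/(|\alpha_1|+|\alpha_2|+1)$; for $k\le K_0$ we simply keep extending the current block, which is legitimate since each block is allowed to be arbitrarily long and the termination argument of the first point still applies.
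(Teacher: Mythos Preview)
Your greedy alternating-block construction is exactly the paper's approach, but there is a genuine gap and a mistaken claim.

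\textbf{The gap: you never treat the integer-part brackets.} The lemma concerns
$\displaystyle\frac{\sum_{i=1}^k [\,a_i s_i\,]}{\sigma_k}$, not
$\displaystyle R_k=\frac{\sum_{i=1}^k a_i s_i}{\sigma_k}$; the $\alpha_j$ are nonnegative reals, so $[a_i s_i]$ and $a_i s_i$ differ. The paper's proof opens with precisely this point, showing
$\bigl|\sum_{i\le k}[\lambda s_i]-\sum_{i\le k}\lambda s_i\bigr|\le k$, whence the two running averages differ by at most $k/\sigma_k\to 0$; hence non-convergence of one forces non-convergence of the other. This is where the hypothesis $k/\sigma_k\to 0$ is actually used, and you omit it.

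\textbf{The mistaken claim.} You assert that $k/\sigma_k\to 0$ is ``equivalently $s_k/\sigma_k\to 0$''. It is not: with $s_k=2^k$ one has $s_k\to\infty$, $\sigma_k=2^{k+1}-2$, $k/\sigma_k\to 0$, yet $s_k/\sigma_k\to 1/2$. Fortunately your overshoot argument is unnecessary anyway: since each $a_i\in\{\alpha_1,\alpha_2\}$, the average $R_k$ is a convex combination of $\alpha_1$ and $\alpha_2$ and cannot leave $[\min\alpha_j,\max\alpha_j]$; and non-convergence already follows from your ``first point'' (each constant block drives $R_k$ arbitrarily close to the chosen $\alpha_j$), regardless of single-step jump sizes. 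So drop the second point, and instead insert the one-line floor estimate above to finish. One incidental difference from the paper: it synchronises the switch times for the $a$- and $b$-sequences by taking a common $n_k=\max(r_k,l_k)$, whereas you build them independently; for the lemma as stated either is fine.
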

\begin{proof}
Without loss of generality let $\alpha_2>\alpha_1$, $\beta_2>\beta_1$, \\
$\displaystyle\frac{\sum\limits^{k}_{i=1}[\lambda s_i]}{\sum\limits^{k}_{i=1}s_i}\leqslant
 \displaystyle\frac{\sum\limits^{k}_{i=1}\lambda s_i}{\sum\limits^{k}_{i=1}s_i}=\lambda$
 \quad and \quad
$\displaystyle\frac{\sum\limits^{k}_{i=1}[\lambda s_i]}{\sum\limits^{k}_{i=1}s_i}>
 \displaystyle\frac{\sum\limits^{k}_{i=1}\lambda(s_i-1)}{\sum\limits^{k}_{i=1}s_i}=
 \lambda-\displaystyle\frac{k}{\sum\limits^{k}_{i=1}s_i}\to\lambda $ $\quad \text{as} \quad k\to\infty.$

Hence, $\lim\limits_{k\to\infty}\displaystyle\frac{\sum\limits^{k}_{i=1}[\lambda s_i]}{\sum\limits^{k}_{i=1}s_i}=\lambda.$

Suppose that $\varepsilon>0$ satisfies
$\alpha_2-\varepsilon>\alpha_1+\varepsilon$ and
$\beta_2-\varepsilon>\beta_1+\varepsilon$. Let $r_1,~l_1$ be smallest
positive integers such that for any $n>r_1$ and $m>l_1$ the following
inequalities hold:
$$\frac{\sum\limits^{n}_{i=1}[\alpha_2 s_i]}{\sum\limits^{n}_{i=1}s_i}>\alpha_2-\varepsilon
\text{ \:\:\: and \:\:\: }
\frac{\sum\limits^{m}_{i=1}[\beta_2 s_i]}{\sum\limits^{n}_{i=1}s_i}>\beta_2-\varepsilon.$$
Denote $n_1=\max(r_1,l_1)$.

Let $r_2,~l_2$ be smallest positive integers such that for all
$r_2>n_1$, $l_2>n_1$ and $n>r_2$, $m>l_2$,
$$\frac{\sum\limits^{n_1}_{i=1}[\alpha_2 s_i]+\sum\limits^{n}_{i=n_1+1}[\alpha_1 s_i]}
       {\sum\limits^{n}_{i=1}s_i}<\alpha_1+\varepsilon
\text{ \:\:\: and \:\:\: }
\frac{\sum\limits^{n_1}_{i=1}[\beta_2 s_i]+\sum\limits^{m}_{i=n_1+1}[\beta_1 s_i]}
     {\sum\limits^{n}_{i=1}s_i}<\beta_1+\varepsilon.$$
Denote $n_2=\max(r_2,l_2)$.

Let $r_3,~l_3$ be smallest positive integers such that for any
$r_3>n_2$, $l_3>n_2$ and $n>r_3$, $m>l_3$, the following inequalities hold:
$$\frac{\sum\limits^{n_1}_{i=1}[\alpha_2 s_i]+\sum\limits^{n_2}_{i=n_1+1}[\alpha_1 s_i]+\sum\limits^{n}_{i=n_2+1}[\alpha_2 s_i]}
       {\sum\limits^{n}_{i=1}s_i}>\alpha_2-\varepsilon
$$
and
$$
\frac{\sum\limits^{n_1}_{i=1}[\beta_2 s_i]+\sum\limits^{n_2}_{i=n_1+1}[\beta_1 s_i]+\sum\limits^{m}_{i=n_2+1}[\beta_2 s_i]}
     {\sum\limits^{n}_{i=1}s_i}>\beta_2-\varepsilon.$$
Denote $n_3=\max(r_3,l_3)$. And so on.

Let $a_n(\alpha_1,\alpha_2)=\alpha_1$ if $n\in\{1,\ldots,n_1-1\}$,
$a_n(\alpha_1,\alpha_2)=\alpha_2$ if $n\in\{n_k,\ldots,n_{k+1}-1\}$
and $k$ is not an even integer; $a_n(\alpha_1,\alpha_2)=\alpha_1$ if
$n\in\{n_k,\ldots,n_{k+1}-1\}$ and $k$ is an even integer.

Let $b_n(\beta_1,\beta_2)=\beta_1$ if $n\in\{1,\ldots,n_1-1\}$,
$b_n(\beta_1,\beta_2)=\beta_2$ if $n\in\{n_k,\ldots,n_{k+1}-1\}$ and $k$ is not an even integer;
$b_n(\beta_1,\beta_2)=\beta_1$ if $n\in\{n_k,\ldots,n_{k+1}-1\}$ and $k$ is an even integer.
This is possible since for fixed $p$ following relations hold:
$$\lim\limits_{k\to\infty}\frac{\sum\limits^{k}_{i=p}[\lambda s_i]}{\sum\limits^{k}_{i=1}s_i}=
\lim\limits_{k\to\infty}\left( \frac{\sum\limits^{k}_{i=1}[\lambda
    s_i]}{\sum\limits^{k}_{i=1}s_i}-
  \frac{\sum\limits^{p-1}_{i=1}[\lambda
    s_i]}{\sum\limits^{k}_{i=1}s_i} \right)=\lambda-0=\lambda.$$
Denote
$x_n=\frac{\sum\limits^{n}_{i=1}[a_i(\alpha_1,\alpha_2)s_i]}{\sum\limits^{k}_{i=1}s_i}$,
$y_n=\frac{\sum\limits^{n}_{i=1}[b_i(\beta_1,\beta_2)s_i]}{\sum\limits^{k}_{i=1}s_i}.$
Suppose the limits $\lim\limits_{k\to\infty}x_n$ and
$\lim\limits_{k\to\infty}y_n$ exist. Let
$\delta<\min(\alpha_2-\alpha_1-2\varepsilon,\beta_2-\beta_1-2\varepsilon)$.
From the Cauchy criterion, it follows that there are $N_1,N_2\in N$
such that for any $a,b>N_1$ and $c,d>N_2$ the inequalities
$|x_a-x_b|<\varepsilon$ and $|y_c-y_d|<\varepsilon$ hold. For a
sufficiently large $k$ we have $n_k>N_j,~j\in\{1,2\}$, whence
$$|x_{n_{k+1}}-x_{n_k}|=\alpha_2-\alpha_1-2\varepsilon>\delta
\quad  \text{and}\quad
  |y_{n_{k+1}}-y_{n_k}|=\beta_2-\beta_1-2\varepsilon>\delta.$$
This contradiction proves the lemma.
\end{proof}

\section{Properties of the set $\Theta_2$}
Let
 $(s_k)$ be a sequence of positive integers such that
$$ \lim\limits_{k\to\infty}s_k=\infty,\quad
\lim\limits_{k\to\infty}\displaystyle\frac{s_{k+1}}{\sum\limits^{k}_{i=1}s_i}=0,\quad
\lim\limits_{k\to\infty}\displaystyle\frac{k}{\sum\limits^{k}_{i=1}s_i}=0.
$$ Let
$
\|\tau_{in}\|
$
be a matrix of dimension $(4\times\infty)$.
Consider the following form of a real number $x\in[0;1]$:
$$
  \hat{x}=\Delta^4_{\underbrace{\underbrace{0\ldots0}_{[\tau_{01}s_1]}
                          \underbrace{1\ldots1}_{[\tau_{11}s_1]}
                          \underbrace{2\ldots2}_{[\tau_{21}s_1]}
                          \underbrace{3\ldots3}_{[\tau_{31}s_1]}}_{\text{1st block}}\ldots
              \underbrace{\underbrace{0\ldots0}_{[\tau_{0k}s_k]}
                          \underbrace{1\ldots1}_{[\tau_{1k}s_k]}
                          \underbrace{2\ldots2}_{[\tau_{2k}s_k]}
                          \underbrace{3\ldots3}_{[\tau_{3k}s_k]}}_{\text{kth block}}\ldots}.
$$

In paper \cite{My3} we proved the following three theorems.
\begin{theorem}\label{teo1}
If
$
\|\tau_{in}\|
$
is a matrix of dimension
$(4\times\infty)$ such that for all $n\in N$ the following conditions hold:
$\tau_{0n}+\tau_{1n}+\tau_{2n}+\tau_{3n}=1$, $\tau_{1n}+2\tau_{2n}+3\tau_{3n}=\theta$, then
$$\lim\limits_{n\to\infty}r_n(\hat{x})=\theta.$$
\end{theorem}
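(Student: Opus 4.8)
The plan is to follow the digit sum of $\hat{x}$ block by block. I would introduce the notation $L_k$ for the length of the $k$th block, $S_k$ for the sum of the digits contained in it, $T_k=\sum_{i=1}^{k}s_i$ for the auxiliary partial sums of $(s_k)$, and $M_k=\sum_{j=1}^{k}L_j$ for the position of the right end of the $k$th block in the $4$-adic expansion of $\hat{x}$. Since the digits inside the $k$th block are the values $i\in\mathcal{A}_4$ repeated $[\tau_{ik}s_k]$ times, we have $L_k=\sum_{i=0}^{3}[\tau_{ik}s_k]$ and $S_k=\sum_{i=1}^{3}i\,[\tau_{ik}s_k]$. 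Writing $[\tau_{ik}s_k]=\tau_{ik}s_k-\{\tau_{ik}s_k\}$ with $0\le\{\tau_{ik}s_k\}<1$ and using the defining relations $\tau_{0k}+\tau_{1k}+\tau_{2k}+\tau_{3k}=1$ and $\tau_{1k}+2\tau_{2k}+3\tau_{3k}=\theta$, one gets the elementary bounds $s_k-4<L_k\le s_k$ and $\theta s_k-6<S_k\le\theta s_k$. Summing these over $k$ yields $|M_k-T_k|<4k$ and $\bigl|\sum_{j=1}^{k}S_j-\theta T_k\bigr|<6k$. Combining with the hypotheses $k/T_k\to0$ and $s_{k+1}/T_k\to0$ (the latter is $s_k/T_{k-1}\to0$ after reindexing), I would deduce $T_k/T_{k-1}\to1$, $k/T_{k-1}\to0$ and $M_k/T_k\to1$.

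Next, for an arbitrary $n\in N$ I would locate it in the block structure: there is a unique $k$ with $M_{k-1}<n\le M_k$, and $k\to\infty$ as $n\to\infty$. Since all digits of $\hat{x}$ are nonnegative, the partial digit sum $\sum_{i=1}^{n}\alpha_i(\hat{x})$ equals $\sum_{j=1}^{k-1}S_j$ plus the (nonnegative, at most $S_k$) contribution of the incomplete $k$th block, hence it lies between $\sum_{j=1}^{k-1}S_j$ and $\sum_{j=1}^{k}S_j$. Dividing by $n$ and using $M_{k-1}<n\le M_k$ gives the sandwich
$$\frac{\sum_{j=1}^{k-1}S_j}{M_k}\le r_n(\hat{x})\le\frac{\sum_{j=1}^{k}S_j}{M_{k-1}}.$$
Now I would replace every $S$-sum and every $M$-term by its estimate in terms of $T_\bullet$ and $k$. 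The left side is at least $(\theta T_{k-1}-6(k-1))/(T_{k-1}+s_k)=(\theta-6(k-1)/T_{k-1})/(1+s_k/T_{k-1})$, which tends to $\theta$; the right side is at most $\theta(T_{k-1}+s_k)/(T_{k-1}-4(k-1))=\theta(1+s_k/T_{k-1})/(1-4(k-1)/T_{k-1})$, which also tends to $\theta$. By the squeeze theorem $r_n(\hat{x})\to\theta$, which is the assertion of Theorem~\ref{teo1}.

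The only point requiring genuine care — and the main obstacle — is the incomplete last block: when $n$ falls in the interior of block $k$, roughly $s_k$ digits, each as large as $3$, have been read with no control on how their proportions compare to the target row $(\tau_{0k},\tau_{1k},\tau_{2k},\tau_{3k})$, so their contribution to $\sum_{i=1}^{n}\alpha_i(\hat{x})$ is essentially arbitrary in the range $[0,S_k]$. This is exactly why the hypothesis $s_{k+1}/\sum_{i=1}^{k}s_i\to0$ is indispensable: it forces the length of a single block to be asymptotically negligible against the accumulated length of all the preceding blocks, so a partial block cannot move the running average away from $\theta$. The remaining condition $k/T_k\to0$ plays only the subordinate role of absorbing the $O(k)$ errors coming from the floor truncations $[\tau_{ik}s_k]$; the assumption $s_k\to\infty$ is not needed beyond what is already implied by the other two conditions.
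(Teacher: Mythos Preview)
Your argument is correct, but note that this paper does not actually prove Theorem~\ref{teo1}: it is quoted, together with Theorems~\ref{teo2} and~\ref{teo3}, from the authors' earlier work~\cite{My3}, so there is no in-paper proof to compare against. Judged on its own merits, your block-by-block sandwich is the natural approach and all the estimates check: the fractional-part bounds $s_k-4<L_k\le s_k$ and $\theta s_k-6<S_k\le\theta s_k$ sum to $|M_k-T_k|<4k$ and $\bigl|\sum_{j\le k}S_j-\theta T_k\bigr|<6k$, and together with $k/T_k\to0$ and $s_k/T_{k-1}\to0$ the two-sided bound squeezes $r_n(\hat x)$ to~$\theta$. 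One minor quibble concerns your closing remark that $s_k\to\infty$ ``is not needed beyond what is already implied by the other two conditions'': the two limit conditions do \emph{not} by themselves force $s_k\to\infty$ (take $s_k=k$ for even $k$ and $s_k=1$ for odd $k$); what is true, and what you presumably meant, is that your proof nowhere invokes that hypothesis.
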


\begin{theorem}\label{teo2}
  If $ \|\tau_{in}\| $ is a stochastic matrix of dimension
  $(4\times\infty)$ and for any fixed $j\in\{0,1,2,3\}$, $
  \lim\limits_{n\to\infty}\tau_{jn}=\lambda, $ then
$$
\nu_j(\hat{x})=\lambda.
$$
\end{theorem}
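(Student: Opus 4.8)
The plan is to locate an arbitrary index $n$ inside the block decomposition of $\hat x$ and to squeeze $N_j(\hat x,n)/n$ between two quantities that both converge to $\lambda$. Put $\sigma_k=\sum_{i=1}^{k}s_i$, let $\ell_i=\sum_{m=0}^{3}[\tau_{mi}s_i]$ denote the length of the $i$th block, and let $M_k=\sum_{i=1}^{k}\ell_i$ be the position at which the $k$th block terminates. Since $\tau_{mi}s_i-1<[\tau_{mi}s_i]\leqslant\tau_{mi}s_i$ for each $m\in\{0,1,2,3\}$, summing over $m$ and using that $\|\tau_{in}\|$ is stochastic ($\sum_{m=0}^{3}\tau_{mi}=1$), we obtain $s_i-4<\ell_i\leqslant s_i$, hence $\sigma_k-4k<M_k\leqslant\sigma_k$; together with $k/\sigma_k\to0$ this gives $M_k/\sigma_k\to1$.

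First I would handle the subsequence $n=M_k$. Because inside the $i$th block the digit $j$ occurs exactly $[\tau_{ji}s_i]$ times, we have $N_j(\hat x,M_k)=\sum_{i=1}^{k}[\tau_{ji}s_i]$, and $|[\tau_{ji}s_i]-\tau_{ji}s_i|<1$ yields $\bigl|N_j(\hat x,M_k)-\sum_{i=1}^{k}\tau_{ji}s_i\bigr|\leqslant k$. A Toeplitz/Stolz-type estimate shows $\sigma_k^{-1}\sum_{i=1}^{k}\tau_{ji}s_i\to\lambda$: given $\varepsilon>0$, pick $p$ with $|\tau_{ji}-\lambda|<\varepsilon$ for $i>p$, split the sum at $p$, and use $\sigma_k\to\infty$ to kill the finite initial segment. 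Dividing the error bound $k$ by $\sigma_k\to\infty$ and invoking $M_k/\sigma_k\to1$ gives $N_j(\hat x,M_k)/M_k\to\lambda$.

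Next I would pass to all $n$. For a given $n$, pick $k=k(n)$ with $M_{k-1}\leqslant n<M_k$; note $k\to\infty$ as $n\to\infty$, and $M_{k-1}>0$ for all large $k$ since $s_i\to\infty$. Monotonicity of $k\mapsto N_j(\hat x,k)$ gives
$$\frac{N_j(\hat x,M_{k-1})}{M_k}\leqslant\frac{N_j(\hat x,n)}{n}\leqslant\frac{N_j(\hat x,M_k)}{M_{k-1}},$$
so it remains to show the outer terms tend to $\lambda$. Writing the lower bound as $\bigl(N_j(\hat x,M_{k-1})/M_{k-1}\bigr)\cdot(M_{k-1}/M_k)$ and the upper bound as $\bigl(N_j(\hat x,M_k)/M_k\bigr)\cdot(M_k/M_{k-1})$, the previous paragraph reduces everything to $M_{k-1}/M_k\to1$. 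This holds since $M_k-M_{k-1}=\ell_k\leqslant s_k$ and $s_k/\sigma_{k-1}\to0$ — the hypothesis $s_{k+1}/\sigma_k\to0$ with index shifted — while $M_{k-1}/\sigma_{k-1}\to1$. The squeeze theorem then yields $N_j(\hat x,n)/n\to\lambda$, i.e. $\nu_j(\hat x)=\lambda$.

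The crux of the argument — and the only delicate point — is this last step: an $n$ lying deep inside a long block could in principle distort the running frequency, so one must verify that every block is asymptotically negligible against the total length accumulated before it. That is precisely what the growth condition $s_{k+1}/\sum_{i=1}^{k}s_i\to0$ guarantees; the digit-counting identity and the weighted-average limit are otherwise routine.
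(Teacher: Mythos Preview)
Your argument is correct. The paper itself does not supply a proof of this theorem: it is one of three results that the authors quote from their earlier article \cite{My3} and state here without proof. Consequently there is no in-paper argument to compare against.

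As to the content, each step checks out. The block-length estimate $s_i-4<\ell_i\leqslant s_i$ follows from stochasticity of the columns; the Toeplitz-type averaging $\sigma_k^{-1}\sum_{i\leqslant k}\tau_{ji}s_i\to\lambda$ is the standard weighted Ces\`aro lemma; and the passage from the subsequence $n=M_k$ to all $n$ via the squeeze
\[
\frac{N_j(\hat x,M_{k-1})}{M_{k-1}}\cdot\frac{M_{k-1}}{M_k}\;\leqslant\;\frac{N_j(\hat x,n)}{n}\;\leqslant\;\frac{N_j(\hat x,M_k)}{M_k}\cdot\frac{M_k}{M_{k-1}}
\]
is exactly where the hypothesis $s_{k+1}/\sigma_k\to0$ earns its keep, ensuring $M_{k-1}/M_k\to1$. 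Your remark that $M_{k-1}>0$ for large $k$ is justified: since $s_i\to\infty$ and each column of $\|\tau_{in}\|$ sums to $1$, at least one entry is $\geqslant\tfrac14$, so $\ell_i\geqslant[s_i/4]>0$ once $s_i\geqslant4$. The proof uses only the convergence of the single row $\tau_{jn}$, which matches how the theorem is applied later in the paper (e.g.\ to conclude $\nu_0(x)=p_0$ while the other rows oscillate).
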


\begin{theorem}\label{teo3}
  Let $(s^{(1)}_k)$, $(s^{(2)}_k)$ be sequences of positive numbers
  such that $\lim\limits_{k\to\infty}s^{(r)}_k=\infty$, $r\in\{1,2\}$
  and $\|p^{(1)}\|=\|p^{(1)}_{in}\|$, $\|p^{(2)}\|=\|p^{(2)}_{in}\|$
  be stochastic matrices of dimension $(4\times\infty)$.  Let
$$
x(\|p^{(r)}\|;\|s^{(j)}_k\|)= \Delta^4_
  {\underbrace{\underbrace{0\ldots0}_{[p^{(r)}_{01}s^{(j)}_1]}
               \underbrace{1\ldots1}_{[p^{(r)}_{11}s^{(j)}_1]}
               \underbrace{2\ldots2}_{[p^{(r)}_{21}s^{(j)}_1]}
               \underbrace{3\ldots3}_{[p^{(r)}_{31}s^{(j)}_1]}}_{\text{1st block}}\ldots
   \underbrace{\underbrace{0\ldots0}_{[p^{(r)}_{0k}s^{(j)}_k]}
               \underbrace{1\ldots1}_{[p^{(r)}_{1k}s^{(j)}_k]}
               \underbrace{2\ldots2}_{[p^{(r)}_{2k}s^{(j)}_k]}
               \underbrace{3\ldots3}_{[p^{(r)}_{3k}s^{(j)}_k]}}_{\text{kth block}}\ldots}.
$$

If
$
\lim\limits_{k\to\infty}|s^{(1)}_k-s^{(2)}_k|=\infty
$,
then
$
x(\|p^{(1)}\|;\|s^{(1)}_k\|)\neq x(\|p^{(2)}\|;\|s^{(2)}_k\|)
$.

If
$
\varlimsup\limits_{n\to\infty}\sum\limits^{3}_{i=0}|p^{(1)}_{in}-p^{(2)}_{in}|>0
$,
then
$
x(\|p^{(1)}\|;\|s^{(1)}_k\|)\neq x(\|p^{(2)}\|;\|s^{(2)}_k\|)
$.
\end{theorem}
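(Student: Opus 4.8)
The plan is to establish both implications in contrapositive form: supposing $x_1:=x(\|p^{(1)}\|;\|s^{(1)}_k\|)$ and $x_2:=x(\|p^{(2)}\|;\|s^{(2)}_k\|)$ are the same real number, I will deduce that $|s^{(1)}_k-s^{(2)}_k|$ does not tend to $\infty$ and that $\sum_{i=0}^{3}|p^{(1)}_{in}-p^{(2)}_{in}|\to0$. The first move is to pass from equality of numbers to equality of digit sequences: in the regime relevant to the paper the displayed block expansions end neither in the period $(0)$ nor in the period $(3)$ (for instance if $\theta\in(0;3)$, since then $p_{0n}\le1-\theta/3$ and $p_{3n}\le\theta/3$ exclude an all-zero or an all-three tail), so each expansion is the canonical $4$-adic representation and $x_1=x_2$ forces $\alpha_i(x_1)=\alpha_i(x_2)$ for all $i$; write $(\alpha_i)$ for this common sequence.

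The heart of the argument is that the block $0^{[\tau_{0k}s_k]}1^{[\tau_{1k}s_k]}2^{[\tau_{2k}s_k]}3^{[\tau_{3k}s_k]}$ is non-decreasing, so the two block partitions of $(\alpha_i)$ (the one coming from $x_1$ and the one coming from $x_2$) are severely constrained by intrinsic data of $(\alpha_i)$. I would record: (i) every descent of $(\alpha_i)$ --- an index $i$ with $\alpha_i>\alpha_{i+1}$ --- lies at a block boundary of both partitions, since no descent occurs inside a sorted word; (ii) for each digit $d$ the set $Z_d=\{i:\alpha_i=d\}$ meets every block in an interval; (iii) for $d\in\{0,3\}$ these intervals cannot coalesce across a block boundary once the block is non-degenerate, and every block with large index is non-degenerate, because its ``non-zero part'' and its ``non-three part'' each have length at least $(\theta/3)s_k-3\to\infty$. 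Consequently the right endpoints of the (eventually all) maximal $Z_3$-intervals --- or, if $3$ occurs only finitely often, the left endpoints of the maximal $Z_0$-intervals, or, if neither $0$ nor $3$ occurs infinitely often, the descents themselves, since then each block is $1^{[\tau_{1k}s_k]}2^{[\tau_{2k}s_k]}$ with both exponents positive and every block boundary is a descent --- form an intrinsic infinite set simultaneously contained in the two families of partial sums $B^{(r)}_k=\sum_{m\le k}L^{(r)}_m$, $L^{(r)}_m=\sum_i[\tau^{(r)}_{im}s^{(r)}_m]\in(s^{(r)}_m-4,\,s^{(r)}_m]$; moreover, since a maximal non-decreasing stretch of $(\alpha_i)$ consists of boundedly many whole blocks, the common descents cut each of the two block sequences into matching pieces of bounded length.

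Granting this, the first implication follows by arithmetic with the $B^{(r)}_k$: two common block endpoints lying close together yield $L^{(1)}_{k}=L^{(2)}_{k'}$ up to the bounded slack from ``boundedly many blocks per stretch'', and matching in addition the length of the intrinsic monochromatic interval sitting inside the relevant block gives $[\tau^{(1)}_{ik}s^{(1)}_k]=[\tau^{(2)}_{ik'}s^{(2)}_{k'}]$; together these force $|s^{(1)}_k-s^{(2)}_k|$ to stay bounded along an infinite set of indices, contradicting $\lim_k|s^{(1)}_k-s^{(2)}_k|=\infty$. For the second implication one may, by the first, assume $|s^{(1)}_k-s^{(2)}_k|$ bounded along those indices; if now $\varlimsup_n\sum_i|p^{(1)}_{in}-p^{(2)}_{in}|>0$, then for infinitely many $k$ some digit $i$ has $|p^{(1)}_{ik}-p^{(2)}_{ik}|$ bounded below by a fixed positive number, so the count of the digit $i$ in block $k$ differs between $x_1$ and $x_2$ by at least a fixed multiple of $\min(s^{(1)}_k,s^{(2)}_k)$ up to an $O(1)$ error --- a quantity tending to $\infty$ --- which is impossible once the blocks are aligned, since the $O(1)$ slack cannot absorb an unbounded discrepancy. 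Hence $\sum_i|p^{(1)}_{in}-p^{(2)}_{in}|\to0$, as desired.

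The step I expect to be the genuine obstacle is the one labeled ``the heart of the argument'': upgrading equality of the digit sequences to approximate alignment of the two block partitions. The mechanism --- runs of the extreme digits $0$ and $3$ (and descents) are intrinsic to $(\alpha_i)$ and, by non-degeneracy of blocks for large $k$, never straddle a block boundary, so they expose a common infinite skeleton of block endpoints --- is what makes the bookkeeping in Claims 1 and 2 work; executing it cleanly requires the case split recorded above and, crucially, non-degeneracy of the blocks, so Theorem \ref{teo3} is to be read for the non-degenerate matrices that actually arise in the construction of $\Theta_2$ (where $\theta\in(0;3)$ and the prescribed digit frequencies keep the blocks multi-digit).
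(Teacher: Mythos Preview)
The paper does not prove Theorem~\ref{teo3}: it is quoted from the authors' earlier work~\cite{My3} (see the sentence ``In paper \cite{My3} we proved the following three theorems'' immediately preceding Theorems~\ref{teo1}--\ref{teo3}) and is used here only as a tool in the later proofs. There is therefore no proof in the present paper against which your attempt can be compared.

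As an independent assessment: your overall strategy is sound in the non-degenerate regime you isolate, and your caveat that some such restriction is needed is well taken (with $p^{(r)}_{0n}\equiv 1$ one has $x_1=x_2=0$ for any choice of $(s^{(r)}_k)$, so the first implication is false as literally stated). The key observation---each block is a non-decreasing word, hence every descent of the common digit sequence is a block boundary of \emph{both} partitions, and once blocks contain both a $0$ and a $3$ every block boundary is a descent---correctly forces the two block partitions to coincide eventually. One point, however, needs tightening: what your argument actually yields is $L^{(1)}_k=L^{(2)}_{k+c}$ for all large $k$ and some fixed integer shift $c$ (early, possibly degenerate, blocks may coalesce differently in the two partitions), and this alone does \emph{not} contradict $\lim_k|s^{(1)}_k-s^{(2)}_k|=\infty$ when $c\neq 0$: take $s^{(1)}_k=2^k$ and $s^{(2)}_k=2^{k-1}$, where $|s^{(1)}_k-s^{(2)}_{k+1}|=0$ yet $|s^{(1)}_k-s^{(2)}_k|=2^{k-1}\to\infty$. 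So the step ``together these force $|s^{(1)}_k-s^{(2)}_k|$ to stay bounded along an infinite set of indices'' is not justified as written. You should either argue that $c=0$---for instance by checking that in the actual applications every block from the first one onward already contains a $0$ and a $3$, so that the very first descent pins both partitions at the same index---or else reformulate the conclusion (and the second implication) in terms of the shifted index and verify that this suffices for the uses made of Theorem~\ref{teo3} in the paper.
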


\begin{theorem}
  If $\theta\in(0;3)$, then the set $\Theta_2$ is an everywhere dense,
  continuum set of zero Lebesgue measure.
\end{theorem}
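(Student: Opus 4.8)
The plan is to establish the three properties — zero Lebesgue measure, cardinality of the continuum, and density — separately, the substance being the construction of points of $\Theta_2$ from the block expansions $\hat x$ governed by Theorems \ref{teo1}--\ref{teo3} together with the oscillation mechanism of Lemma \ref{teo4}. The measure statement is immediate: every $x\in\Theta_2$ lacks the frequency of at least one $4$-adic digit, hence is not normal for the base $4$, and the set of numbers that are not normal for a given base has Lebesgue measure zero by Borel's theorem \cite{Bor1}; therefore $\Theta_2$ has Lebesgue measure zero.

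For the main point I would fix $\theta\in(0,3)$ and choose a real $\lambda$ with $\max\{0,1-\theta\}<\lambda<1-\tfrac{\theta}{3}$ (this interval is non-empty, and automatically $\lambda\in(0,1)$). Then $1<\tfrac{\theta}{1-\lambda}<3$, and an elementary computation shows that
$$
L=\bigl\{(\tau_1,\tau_2,\tau_3):\tau_i\ge0,\ \tau_1+\tau_2+\tau_3=1-\lambda,\ \tau_1+2\tau_2+3\tau_3=\theta\bigr\}
$$
is a non-degenerate segment; its defining line has direction $(1,-2,1)$, along which each coordinate is strictly monotone, so every point of the relative interior of $L$ has all three coordinates strictly positive. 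Pick two distinct points $(P_1,P_2,P_3)$, $(Q_1,Q_2,Q_3)$ in that relative interior; then $P_i\ne Q_i$ for each $i$, and the stochastic vectors $P=(\lambda,P_1,P_2,P_3)$, $Q=(\lambda,Q_1,Q_2,Q_3)$ both satisfy $\tau_1+2\tau_2+3\tau_3=\theta$ and have only positive entries. Let $(s_k)$ be any sequence of positive integers satisfying the three conditions stated before Theorem \ref{teo1}; such a sequence also satisfies the hypotheses of Lemma \ref{teo4}, since $s_k\to\infty$ forces $k/\sum_{i\le k}s_i\to0$. Applying Lemma \ref{teo4} with $\alpha_1=P_1$, $\alpha_2=Q_1$ produces a sequence $(a_k)$, $a_k\in\{P_1,Q_1\}$, for which $\frac{\sum_{k=1}^{K}[a_ks_k]}{\sum_{k=1}^{K}s_k}$ has no limit as $K\to\infty$. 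Now define the matrix $\|\tau_{in}\|$ by taking its $k$-th column to be $P$ if $a_k=P_1$ and $Q$ if $a_k=Q_1$, and let $\hat x$ be the associated number; since $P,Q$ have only positive entries, $\hat x$ is $4$-adic--irrational. Every column of $\|\tau_{in}\|$ satisfies the two linear relations of Theorem \ref{teo1}, hence $r_n(\hat x)\to\theta$ and $\hat x\in S_\theta$; the zeroth row of $\|\tau_{in}\|$ is constantly $\lambda$, so $\nu_0(\hat x)=\lambda$ by Theorem \ref{teo2}; and the $k$-th block of $\hat x$ contains exactly $[\tau_{1k}s_k]=[a_ks_k]$ digits ``$1$'' while the block lengths lie in $[s_k-4,s_k]$ and $k/\sum_{i\le k}s_i\to0$, so the relative frequency $v^{(N)}_1(\hat x)$ evaluated at the ends of blocks does not converge, i.e. $\nu_1(\hat x)$ does not exist. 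Since $\hat x\in S_\theta$, $\nu_0(\hat x)$ exists and $\nu_1(\hat x)$ does not, the corollary to Lemmas \ref{lem1} and \ref{lem2} (no number of $S_\theta$ has the frequencies of exactly two or of exactly three digits) forces $\hat x$ to have the frequency of exactly one digit, i.e. $\hat x\in\Theta_2$; in particular $\Theta_2\ne\varnothing$.

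Density and cardinality then follow quickly. For any cylinder $\Delta^4_{c_1\ldots c_p}$, prefixing the word $c_1\ldots c_p$ to the $4$-adic expansion of the number $\hat x$ above changes neither $\lim_n r_n$ nor any digit frequency, so the new number again lies in $\Theta_2$ and belongs to $\Delta^4_{c_1\ldots c_p}$; as such cylinders form a base of the topology of $[0;1]$, $\Theta_2$ is everywhere dense. For the cardinality I would run the construction of $\hat x$ for each $\xi\in[1,2]$ with $s^{(\xi)}_k=\lfloor\xi k^2\rfloor$ (which satisfies the required three conditions) while keeping $\lambda$, $P$, $Q$ fixed, obtaining $\hat x^{(\xi)}\in\Theta_2$ for each $\xi$; if $\xi\ne\xi'$ then $|s^{(\xi)}_k-s^{(\xi')}_k|\ge|\xi-\xi'|k^2-1\to\infty$, so $\hat x^{(\xi)}\ne\hat x^{(\xi')}$ by the first assertion of Theorem \ref{teo3}. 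Hence $\Theta_2$ has at least the cardinality of the continuum, and, being a subset of $[0;1]$, it is a continuum set.

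The hard part is the middle step: arranging the block construction so that $r_n\to\theta$, the frequency of one digit exists, and the frequency of another does not — all simultaneously — and then checking that the within-block oscillations and the errors introduced by the integer parts $[\tau_{ik}s_k]$ are asymptotically negligible, so that the non-convergence furnished by Lemma \ref{teo4} is genuinely inherited by $v^{(n)}_1(\hat x)$. Once that is in place, the appeal to the corollary of Lemmas \ref{lem1}--\ref{lem2} and to Theorem \ref{teo3} makes the remaining assertions routine.
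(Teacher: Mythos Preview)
Your proof is correct and follows essentially the same route as the paper: zero measure via Borel, density via cylinder prefixing, and a point of $\Theta_2$ built from the block expansion $\hat x$ with columns oscillating between two stochastic vectors $P,Q$ sharing a common first coordinate, using Lemma~\ref{teo4} for the oscillation and Theorems~\ref{teo1}--\ref{teo2} to control $r_n$ and $\nu_0$. The only noteworthy difference is in the continuum step: the paper fixes $s_k=k$ and varies the pair $(\overline p,\overline q)$, invoking the second assertion of Theorem~\ref{teo3}, whereas you fix $P,Q$ and vary the sequence $s^{(\xi)}_k=\lfloor\xi k^2\rfloor$, invoking the first assertion; both work, and your explicit appeal to the corollary of Lemmas~\ref{lem1}--\ref{lem2} to pin down membership in $\Theta_2$ is a small gain in precision over the paper's somewhat implicit treatment of that point.
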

\begin{proof}
  The well-known Borel's theorem states that
  $\nu_0=\nu_1=\nu_2=\nu_3=\dfrac{1}{4}$ for almost all in the sense
  of Lebesgue measure numbers of $[0;1]$. From this fact it follows
  that Lebesgue measure of the set $\Theta_2$ is equal to zero.

  \emph{Continuality.} Construct a continuum subset of $\Theta_2$ such
  that frequency of the digit $0$ exists for all elements (similarly
  we can construct a continuum subset of $\Theta_2$ such that the
  frequency of a fixed digit ``$i$'', $i\in\{1,2,3\}$, exists for all
  elements). Let $s_k=k$ and $\overline{p}=(p_0,p_1,p_2,p_3)$,
  $\overline{q}=(q_0,q_1,q_2,q_3)$ be stochastic vectors such that
  $p_0=q_0$, $p_1+2p_2+3p_3=q_1+2q_2+3q_3=\theta$, $p_1\neq q_1$, then
  $ \lim\limits_{k\to\infty}s_k=\infty $, $
  \lim\limits_{k\to\infty}\displaystyle\frac{k}{\sum\limits^{k}_{i=1}s_i}=0
  $, $
  \lim\limits_{k\to\infty}\displaystyle\frac{s_{k+1}}{\sum\limits^{k}_{i=1}s_i}=0.
  $ From Lemma \ref{teo4}, where $\alpha_1=\beta_1=p_1$,
  $\alpha_2=\beta_2=q_1$, it follows that there exists a sequence
  $a_n(p_1,q_1)$ such that $a_n(p_1,q_1)=p_1$ or $a_n(p_1,q_1)=q_1$
  for any $n\in N$ and
  $\lim\limits_{k\to\infty}\frac{\sum\limits^{k}_{i=1}[a_i(p_1,q_1)s_i]}{\sum\limits^{k}_{i=1}s_i}$
  does not exist.  Denote $\tau_{0k}=p_0$,
  $\tau_{1k}=a_k(p_1,q_1)$. Using the system
$$\left\{
\begin{aligned}
   &\tau_{2k}+\tau_{3k}=1-p_0-a_k(p_1,q_1),\\
   &2\tau_{2k}+3\tau_{3k}v^{(n)}_3=\theta-a_k(p_1,q_1)\\
\end{aligned}
\right.$$ we calculate $\tau_{2k},~\tau_{3k}$. Namely,
$\tau_{3k}=\theta+a_k(p_1,q_1)-2+2p_0$
$\tau_{2k}=3-3p_0-\theta-2a_k(p_1,q_1)$. It is evident that
$\tau_{ik}$, where $i\in\{2,3\}$ is equal to $p_i$ or $q_i$ if
$a_k(p_1,q_1)$ is equal to $p_1$ or $q_1$, respectively.
From Theorem \ref{teo2} it follows that $\nu_0(x)=p_0$. Since
$\lim\limits_{k\to\infty}\frac{\sum\limits^{k}_{i=1}[a_i(p_1,q_1)s_i]}{\sum\limits^{k}_{i=1}s_i}$
does not exist we obtain that the frequency $\nu_1(x)$ does not exist
either.  From Theorem \ref{teo3} it follows that different numbers $x$
constructed as specified above correspond to different pairs of
vectors $\overline{p}$ and $\overline{q}$ with relevant
properties. Since the set of such pairs is a continuum, we see that the
set $\Theta_2$ is a continuum.

\emph{Everywhere density.} Since the condition
$\lim\limits_{k\to\infty}r_k(x)=\theta$ does not depend on an
arbitrary finite group of first symbols and for any interval $[a;b]$
there exists a cylinder
$[\Delta^4_{\gamma_1\gamma_2\ldots\gamma_r(0)};\Delta^4_{\gamma_1\gamma_2\ldots\gamma_r(3)}]$
completely contained in it, we see that $\Theta_2$ is an everywhere dense
set.
\end{proof}

\begin{theorem}\label{teoanalog}
  If $\theta\in(0;3)$, then the Hausdorff--Besicovitch dimension
  $\alpha_0(\Theta_2)$ of the set $\Theta_2$ is positive,
  i.e., $\alpha_0(\Theta_2)>0$.
\end{theorem}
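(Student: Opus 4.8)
The plan is to construct a subset $\mathcal{C}\subseteq\Theta_2$ that is large enough to have positive Hausdorff--Besicovitch dimension, by reusing the block construction of the previous theorem but \emph{allowing the digits inside each block to be permuted freely}. Since $\theta\in(0;3)$, one may choose two \emph{distinct} stochastic vectors $\overline p=(p_0,\dots,p_3)$ and $\overline q=(q_0,\dots,q_3)$, both with strictly positive entries, such that $p_0=q_0$, $p_1+2p_2+3p_3=q_1+2q_2+3q_3=\theta$ and $p_1\neq q_1$ (hence also $p_2\neq q_2$ and $p_3\neq q_3$, by the two linear relations). Fix a sequence $(s_k)$ of positive integers with $s_k\to\infty$, $k/\sum_{i=1}^{k}s_i\to0$ and $s_{k+1}/\sum_{i=1}^{k}s_i\to0$, and take the oscillating sequence $a_n=a_n(p_1,q_1)$ supplied by Lemma~\ref{teo4}. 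Exactly as in the proof of the preceding theorem, put $\tau_{0k}=p_0$, $\tau_{1k}=a_k$, and let $\tau_{2k},\tau_{3k}$ be determined by the system \eqref{*}; then $(\tau_{0k},\dots,\tau_{3k})\in\{\overline p,\overline q\}$ for each $k$. I would let $\mathcal{C}$ be the set of all $x\in[0;1]$ whose $4$-adic expansion is a concatenation of blocks $B_1B_2\dots$, where $B_k$ has length $s_k$ and contains exactly $[\tau_{1k}s_k]$ digits $1$, $[\tau_{2k}s_k]$ digits $2$, $[\tau_{3k}s_k]$ digits $3$ and $s_k-[\tau_{1k}s_k]-[\tau_{2k}s_k]-[\tau_{3k}s_k]$ digits $0$, arranged in an \emph{arbitrary} order inside $B_k$.

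The first thing to verify is that $\mathcal{C}\subseteq\Theta_2$. Write $n_K=\sum_{k=1}^{K}s_k$. The point of the hypotheses on $(s_k)$ is that a single block is asymptotically negligible: for $n_{k-1}<n\leqslant n_k$, permuting the digits inside $B_k$ perturbs each $v^{(n)}_i$ and $r_n$ by only $O(s_k/n_{k-1})\to0$. Hence for every $x\in\mathcal{C}$ the conclusions of Theorems~\ref{teo1} and \ref{teo2} persist: $r_n(x)\to\theta$ and $\nu_0(x)=p_0$ exists, while the relative frequency of the digit $1$ evaluated at the block-ends equals $\big(\sum_{i=1}^{k}[a_is_i]\big)/n_k$, which has no limit by Lemma~\ref{teo4}; thus $\nu_1(x)$ does not exist, and since $p_2\neq q_2$ and $p_3\neq q_3$ the same argument rules out $\nu_2(x)$ and $\nu_3(x)$. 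So $x$ has a frequency of exactly one digit, i.e.\ $x\in\Theta_2$ by the Corollary.

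Next I would estimate $\dim_H\mathcal{C}$ from below by the mass distribution principle. The cylinders of rank $n_K$ that meet $\mathcal{C}$ are precisely the concatenations of admissible orderings of $B_1,\dots,B_K$, so there are $\prod_{k=1}^{K}M_k$ of them, where $M_k$ is the multinomial coefficient counting the orderings of $B_k$. Because all entries of $\overline p$ and $\overline q$ are positive, Stirling's formula gives $\log_4 M_k=s_k\big(H(\overline\tau_k)+o(1)\big)$, with $H(\overline\tau)=-\sum_{i=0}^{3}\tau_i\log_4\tau_i>0$ and $\overline\tau_k\in\{\overline p,\overline q\}$; consequently $\log_4\prod_{k=1}^{K}M_k\geqslant\beta n_K$ for some fixed $\beta>0$ and all large $K$. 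Let $\mu$ be the Borel probability measure on $\mathcal{C}$ distributing mass uniformly among the rank-$n_K$ cylinders meeting $\mathcal{C}$, for every $K$; then $\mu(C)\leqslant 4^{-\beta n_K}$ for such a cylinder, and for an arbitrary cylinder $C$ of rank $n$ with $n_K\leqslant n<n_{K+1}=n_K+s_{K+1}=n_K(1+o(1))$ (using $s_{K+1}/n_K\to0$) one gets $\mu(C)\leqslant 4^{-\beta n_K}\leqslant|C|^{\beta/2}$ once $n$ is large. Converting these cylinder bounds into ball bounds in the usual way and applying the mass distribution principle yields $\dim_H\mathcal{C}\geqslant\beta/2>0$, whence $\alpha_0(\Theta_2)\geqslant\dim_H\mathcal{C}>0$.

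The genuine obstacle is the first step of the second paragraph: one must be certain that permuting digits within the blocks neither creates a limit of $v^{(n)}_1$ (which would push $x$ into $\Theta_1$) nor destroys the existence of $\lim v^{(n)}_0$ — and this is exactly what the conditions $k/\sum_{i\le k}s_i\to0$ and $s_{k+1}/\sum_{i\le k}s_i\to0$ guarantee, just as in Theorems~\ref{teo1}--\ref{teo3}. Everything else is the standard Besicovitch--Eggleston dimension count (Stirling for the block multiplicities, then the mass distribution principle), adapted to the inhomogeneous block lengths and the oscillating frequencies.
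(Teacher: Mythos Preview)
Your argument is correct, and it takes a genuinely different route from the paper's. The paper does \emph{not} use the variable-length blocks $s_k$ from the continuality proof; instead it fixes a large integer $k$, builds blocks of \emph{constant} length $k$ whose digit counts oscillate between two systems so that $\nu_0$ exists and $\nu_3$ does not, and reserves the first symbol of each block as a free bit $\varepsilon_j\in\{0,1\}$. The set $C_1$ obtained by varying $(\varepsilon_j)$ is then shown, by a direct box--counting computation, to have dimension $1/(2k)$. The source of positive dimension in the paper is thus one binary choice per block, whereas in your construction it is the full multinomial freedom of permuting the digits inside each block, measured via Stirling and the mass distribution principle.

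What each approach buys: the paper's argument is more elementary (no Stirling asymptotics, no mass distribution principle, just counting cylinders), at the cost of a rather weak bound $\alpha_0(\Theta_2)\geqslant 1/(2k)$ for an unspecified large $k$. Your argument is a little heavier technically but yields a far sharper bound: with $\beta$ essentially $\min\bigl(H(\overline p),H(\overline q)\bigr)$ in base~$4$, you get $\alpha_0(\Theta_2)\geqslant\beta/2$, and by choosing $\overline p,\overline q$ close together this can be pushed arbitrarily close to the Besicovitch--Eggleston value $-\log_4\prod p_i^{p_i}$ for the optimal $\overline p$. One small remark: your sentence ``the same argument rules out $\nu_2(x)$ and $\nu_3(x)$'' is safest read through the Corollary to Lemmas~\ref{lem1}--\ref{lem2} (a point in $S_\theta$ cannot have exactly two or three frequencies), rather than as a second direct appeal to Lemma~\ref{teo4}, since Lemma~\ref{teo4} was invoked only for the pair $(p_1,q_1)$ and its switching indices were tailored to that pair.
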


\begin{proof}
   Let $(\varepsilon_i)$ be an arbitrary sequence of zeros and ones, vectors $(p_0,p_1,p_2,p_3)$ and $(p_0,q_1,q_2,q_3)$ be stochastic vectors
  such that $p_0>0$, $p_1\neq q_1$,
  $p_1+2p_2+3p_3=\theta=q_1+2q_2+3q_3$, $x_i=[p_0k(i+1)]-[p_0ki]-r_i$,
  $r_i=\left\{
       \begin{aligned}
       &0,\quad \text{if}\ \ \varepsilon_i=1\\
       &1,\quad \text{if}\ \  \varepsilon_i=0
       \end{aligned}
     \right. ,  $
$t_i=[p_3k(i+1)]-[p_3ki]$. 

Consider the system
\begin{equation}
  \label{2*}
  \left\{
    \begin{aligned}
       &x_i+y_i+z_i+t_i=k-1,\\
       &y_i+2z_i+3t_i=[\theta k(i+1)]-[\theta ki],
     \end{aligned}
  \right.
\end{equation}
whence $z_i=[\theta k(i+1)]-[\theta ki]-k+1+x_i-2y_i,$
$y_i=2(k-1)-([\theta k(i+1)]-[\theta ki])-2x_i+t_i.$ We obtain that
$$
\begin{aligned}
\displaystyle\frac{z_i}{k}&=\displaystyle\frac{[\{\theta
ki\}+\theta k]}{k}-1+
                            \displaystyle\frac{1}{k}+
                            \displaystyle\frac{[\{p_0ki\}+p_0k]}{k}-
                            \displaystyle\frac{2[\{p_3ki\}+p_3k]}{k}  \\
                           &=\theta-1+p_0-2p_3+
                            \displaystyle\frac{[\{\theta ki\}+\{\theta k\}]}{k}+
                            \displaystyle\frac{1}{k}+
                            \displaystyle\frac{[\{p_0ki\}+\{p_0k\}]}{k}-
                            \displaystyle\frac{2[\{p_3ki\}+\{p_3k\}]}{k}.
\end{aligned}
 $$

Since $\theta-1+p_0-2p_3=p_2$ and for a sufficiently large $k$ we
have $z_i\in N$ for all $i\in N$. In the same way,
$$
\begin{aligned}
\displaystyle\frac{y_i}{k}&=2-\displaystyle\frac{2}{k}-
                            \displaystyle\frac{[\{\theta ki\}+\theta k]}{k}-
                            \displaystyle\frac{2[\{p_0ki\}+p_0k]}{k}+
                            \displaystyle\frac{r_i}{k}+
                            \displaystyle\frac{[\{p_3ki\}+p_3k]}{k}\\
                           &=2-\theta-2p_0+3p_3-
                            \displaystyle\frac{2}{k}-
                            \displaystyle\frac{[\{\theta ki\}+\{\theta k\}]}{k}-
                            \displaystyle\frac{2[\{p_0ki\}+\{p_0k\}]}{k}
                            \\
                            &  +
                            \displaystyle\frac{r_i}{k}+
                            \displaystyle\frac{[\{p_3ki\}+\{p_3k\}]}{k}.
\end{aligned}
$$

 Since $2-\theta-2p_0+3p_3=p_1$ and for a sufficiently large $k$
we obtain $y_i\in N$ for all $i\in N$. Similarly we prove that for a
sufficiently large $k\in N$ all solutions of the system
\begin{equation}
  \label{**}
  \left\{
    \begin{aligned}
       &x_i+y_i+z_i+t_i=k-1,\\
       &y_i+2z_i+3t_i=[\theta k(i+1)]-\theta ki],\\
       &x_i=[p_0k(i+1)]-[p_0ki]-r_i,\\
       &t_i=[q_3k(i+1)]-[q_3ki],
     \end{aligned}
  \right.
\end{equation}
are positive integers for all $i\in N$.

Let $k$ be a sufficiently large positive integer. Let all solutions of
systems \eqref{2*} and \eqref{**} be positive integers for arbitrary
sequence of zeros and ones $(\varepsilon_i)$, $i\in N$.  Construct the
number $x(\varepsilon_i)$ as follows:
 $$
x(\varepsilon_i)=\Delta^4_{\underbrace{\varepsilon_1
    \underbrace{0\ldots0}_{x_1} \underbrace{1\ldots1}_{y_1}
    \underbrace{2\ldots2}_{z_1} \underbrace{3\ldots3}_{t_1}}_{\text{k
      symbols}}\ldots \underbrace{\varepsilon_j
    \underbrace{0\ldots0}_{x_j} \underbrace{1\ldots1}_{y_j}
    \underbrace{2\ldots2}_{z_j} \underbrace{3\ldots3}_{t_j}}_{\text{k
      symbols}}\ldots}
$$
Without loss of generality put $p_3>q_3$, let $\delta>0$ be such that
$p_3-\delta>q_3-\delta$. Let $r_1$ be a positive integer such that
$(x_i,y_i,z_i,t_i)$ is a solution of system \eqref{2*} for any
$j\in\{1,2,\ldots,r_1\}$ and
$$
    \displaystyle\frac{N_3(x,kr_1)}{kr_1}=
    \displaystyle\frac{\sum\limits^{r_1}_{i=1}t_i}{kr_1}=
    \displaystyle\frac{[p_3k(r_1+1)]}{kr_1}>p_3-\delta,
$$
this is possible since the last value tends to $p_3$ as $r_1\to\infty.$

Let $r_1<r_2$ be a positive integer such that $(x_j,y_j,z_j,t_j)$ is a
solution of system \eqref{**} for any $j\in\{r_1+1,\ldots,r_2\}$ and
$$
\displaystyle\frac{N_3(x,kr_2)}{kr_2}=\displaystyle\frac{\sum\limits^{r_2}_{i=1}t_i}{kr_2}=\displaystyle\frac{[p_3k(r_1+1)]-[q_3k(r_1+1)]+[q_3k(r_2+1)]}{kr_2}<q_3+\delta,
$$
this is possible since the last value tends to $q_3$ as $r_2\to\infty.$

Let $r_2<r_3$ be a positive integer such that $(x_j,y_j,z_j,t_j)$ is a
solution of system \eqref{2*} for any $j\in\{r_2+1,\ldots,r_3\}$ and
$$
\begin{aligned}
& \displaystyle\frac{N_3(x,kr_3)}{kr_3}\\
& =3\displaystyle\frac{[p_3k(r_1+1)]-[q_3k(r_1+1)]+[q_3k(r_2+1)]-[p_3k(r_2+1)]+[p_3k(r_3+1)]}{kr_3}>\\
&>p_3-\delta,
\end{aligned}
$$
this is possible since the last value tends to $p_3$ as $r_k\to\infty.$ And so on.

We obtain that
$\left|\displaystyle\frac{N_3(x,kr_i)}{kr_i}-\displaystyle\frac{N_3(x,kr_{i+1})}{kr_{i+1}}\right|>p_3-q_3-2\delta$
for all $i\in N$. Assume that
$\lim\limits_{i\to\infty}\displaystyle\frac{N_3(x,kr_i)}{kr_i}$
exists. Hence, we have a contradiction with Cauchy's criterion.
Thus,
$\lim\limits_{i\to\infty}\displaystyle\frac{N_3(x,kr_i)}{kr_i}$
does not exist, i.e., the frequency $\nu_3(x(\varepsilon_i))$ does not
exist. On the other hand, if $kj\leqslant n\leqslant
k(j+1)$ then
 $$
 \begin{aligned}
\displaystyle\frac{N_0(x(\varepsilon_i),n)}{n}&\geqslant
                   \displaystyle\frac{\sum\limits^{j}_{i=1}[p_0k(i+1)]-[p_0ki]}{k(j+1)}=
                   \displaystyle\frac{[p_0k(j+1)]-[p_0k]}{k(j+1)}\\
                   &=p_0-\displaystyle\frac{\{p_0k(j+1)\}-[p_0k]}{k(j+1)}\to p_0,\quad \text{as}\quad j\to\infty,
\\
\displaystyle\frac{N_0(x(\varepsilon_i),n)}{n}&\leqslant
                   \displaystyle\frac{\sum\limits^{j+1}_{i=1}[p_0k(i+1)]-[p_0ki]}{kj}=
                   \displaystyle\frac{\{p_0k(j+2)\}-[p_0k]}{kj}\\
                   &=p_0\displaystyle\frac{j+2}{j}-\frac{\{p_0k(j+2)\}-[p_0k]}{kj}\to p_0,\quad \text{as}\quad j\to\infty,
\end{aligned}
$$
hence, $\nu_0(x(\varepsilon_i))=p_0$. Also
 $$
\begin{aligned}
 r_n(x(\varepsilon_i))&\geqslant
                   \displaystyle\frac{\sum\limits^{j}_{i=1}[\theta k(i+1)]-[\theta ki]}{k(j+1)}=
                   \displaystyle\frac{[\theta k(j+1)]-[\theta k]}{k(j+1)}\\
                   &=\theta-\displaystyle\frac{\{\theta k(j+1)\}-[\theta k]}{k(j+1)}\to \theta,\quad \text{as}\quad j\to\infty,
\\
 r_n(x(\varepsilon_i))&\leqslant
                   \displaystyle\frac{\sum\limits^{j+1}_{i=1}[\theta k(i+1)]-[\theta ki]}{kj}=
                   \displaystyle\frac{\{\theta k(j+2)\}-[\theta k]}{kj}\\
                   &=\theta\displaystyle\frac{j+2}{j}-\frac{\{\theta k(j+2)\}-[\theta k]}{kj}\to \theta,\quad \text{as}\quad j\to\infty,
\end{aligned}
$$
hence, $\lim\limits_{n\to\infty}r_n(x(\varepsilon_i))=\theta$,
and from Theorem \ref{teo3}, the frequencies $\nu_1(x(\varepsilon_i))$
and $\nu_2(x(\varepsilon_i))$ do not exist.

Thus, $x(\varepsilon_i)\in \Theta_2$.

Selecting an arbitrary quantity of (not necessarily consecutive)
blocks of number $x(\varepsilon_i)$ and changing the order of digits
(except for $\varepsilon_{i}$) inside each block we get either the
``old" number $x(\varepsilon_i)$, or a new number
$\widetilde{x}(\varepsilon_i)$. These numbers belong to $\Theta_2$
since $N_l(x(\varepsilon_i),kr)=N_l(\widetilde{x}(\varepsilon_i),kr) $
for any $r \in N$ and $l \in \{0,1,2,3\}$. Denote by
$C(x(\varepsilon_i))$ the set of numbers
$\widetilde{x}(\varepsilon_i)$ obtained from $x(\varepsilon_i)$ by
choosing an arbitrary number of blocks and changing the digit order
inside them. It is obvious that the set is a continuum. Denote by
$C_1$ a union of the sets $C(x(\varepsilon_i))$ with respect to all
possible sequences $(\varepsilon_i)$ and show that
$\alpha_0(C_1)=\frac{1}{2k}$.

To calculate the Hausdorff--Besicovitch dimension it is sufficient to
use covering of $4$-adic cylinders. Consider a covering of the set
$C_1$ by cylinders of the same rank $m$. the $\alpha$-volume of the
covering is equal to
\[
R^{\alpha}_m=
             \left\{
              \begin{aligned}
                 & 2^{t-1}(4^{-(kt-j)})^{\alpha},\quad \text{if}\quad   m=kt-j,~j\in\{1,\ldots,k-1\},\\
                 & 2^t(4^{-kt})^{\alpha},\quad\text{if}\quad m=kt.
              \end{aligned}
              \right.
\]
It is clear that $R^{\alpha}_{kt-1}<R^{\alpha}_{kt-j}$,
$j\in\{2,\ldots,k-1\}$ hence, consider an $\alpha$-covering of the set $C_1$
with cylinders of rank $n=kt-1$.

The Hausdorff's box--counting $\alpha$-measure of the set $C_1$ is equal to
\[\widehat{H}_{\alpha}(C_1)=\varliminf\limits_{t\to\infty}\frac{2^{t-1}}{4^{(kt-1)\alpha}}=
2^{2\alpha-1}\varliminf\limits_{t\to\infty}2^{t(1-2k\alpha)}.\]
Whence,
\[\widehat{H}_{\alpha}(C_1)=\left\{
  \begin{aligned} & 0,\quad \text{if}\ \ \alpha>\frac{1}{2k},\\ &
\infty,\quad\text{if}\ \ \alpha<\frac{1}{2k}.
\end{aligned} \right.\] Therefore, box--counting dimension of the set
$C_1$ is equal to $\alpha=\frac{1}{2k}$. Let us show that
$\alpha_0(C_1)=\frac{1}{2k}$. Consider an arbitrary finite covering of
the set $C_1$ by $4$-adic cylinders $\{v_j\}$, $j\in\{1,\ldots,l\}$,
and prove that if $\alpha=\frac{1}{2k}$ then the preceding rank covering
is not improvable. Let $u_i$ be one of cylinders of the covering. Then
$|u_j|=3^{-n}$ for some $n\in N$. Let $n=kp-r,~r\in\{0,\ldots,k-1\}$,
then  $\alpha$-volume of covering of the set $C_1\cap\Delta_j$ by cylinders
of rank $kp-r+m$ is equal to
$$
R^{\alpha}_{kp-r+m}(C_1\cap\Delta_j)=\left\{
                                \begin{aligned}
                 & 2^{l-1}(4^{-(kp-r+kl-j)})^{\alpha},\quad \text{if}\ \ m=kl+r-j,~j\in\{1,\ldots,k-1\},\\
                 & 2^k(4^{-(kp-r+kl)})^{\alpha},\quad \text{if}\ \ m=kl+r,~l\in N.
                                \end{aligned}
                                     \right.
$$
Let us show that $R^{\alpha}_{kp-r+m}(C_1\cap v_j)\leqslant
v_n=(4^{-(kp-r)})^{\alpha}$. It is obvious that
$R^{\alpha}_{k(p+l)-1}<R^{\alpha}_{k(p+l)-j}$, if
$j\in\{2,\ldots,k\}$. Consider an $\alpha$-covering $K\cap\Delta_j$ by
cylinders of rank $k(p+l)-1$. Its volume is equal to
$$2^{l-1}(4^{-(kp+3l-1)})^{\alpha}.$$
Since $\left(\displaystyle\frac{2}{4^{kl}}\right)^l=1$ and $\displaystyle\frac{4^{(1-r)\alpha}}{2}<1$ we obtain if $\alpha=\displaystyle\frac{1}{2k}$ then\\
$
2^{l-1}(4^{-(kp+3l-1)})^{\alpha}=
\left(4^{-(kp-r)}\right)^{\alpha}\displaystyle\frac{4^{(1-r)\alpha}}{2}
\left(\displaystyle\frac{2}{4^{kl}}\right)^l\leqslant
\left(4^{-(kp-r)}\right)^{\alpha}.
$
Hence if $\alpha=\displaystyle\frac{1}{2k}$ we have
$\widehat{H}_{\alpha}(C_1)=H_{\alpha}(C_1)=\displaystyle\frac{1}{2k}$
and we see that the Hausdorff--Besicovitch dimension of the set $C_1$  is
equal to the box--counting dimension of the set $C_1\subset \Theta_2$,
thus
$\alpha_0(\Theta_2)\geqslant\alpha_0(C_1)=\displaystyle\frac{1}{2k}>0$.\\
\end{proof}

\section{The set $\Theta_3$}

\begin{theorem}
  If $\theta\in(0;3)$, then the set $\Theta_3$ is an everywhere dense,
  continuum set of zero Lebesgue measure.
\end{theorem}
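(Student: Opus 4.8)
The plan is to establish the three assertions (zero measure, continuality, density) separately, recycling the block machinery of Theorems \ref{teo1}--\ref{teo3} and Lemma \ref{teo4}.

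\emph{Zero Lebesgue measure.} This is immediate from Borel's theorem recalled in the Introduction: for Lebesgue-almost every $x\in[0;1]$ all four frequencies $\nu_i(x)$ exist (and equal $1/4$), so the set of $x$ for which some $\nu_i(x)$ fails to exist is Lebesgue-null, and $\Theta_3$ is contained in it.

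\emph{Construction of an element of $\Theta_3$.} Fix $\theta\in(0;3)$. The affine conditions $\tau_0+\tau_1+\tau_2+\tau_3=1$, $\tau_1+2\tau_2+3\tau_3=\theta$ cut out a two-dimensional set whose relative interior meets the open simplex (one realises any mean in $(0;3)$ with strictly positive coordinates by mixing the uniform vector with a point mass at $0$ or at $3$), so there exist -- in fact a continuum of -- pairs of stochastic vectors $\overline p=(p_0,p_1,p_2,p_3)$, $\overline q=(q_0,q_1,q_2,q_3)$ with all coordinates strictly positive, $p_1+2p_2+3p_3=q_1+2q_2+3q_3=\theta$, and $p_i\neq q_i$ for \emph{every} $i\in\{0,1,2,3\}$. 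Fix such a pair and a sequence $(s_k)$ with $s_k\to\infty$, $k/\sum_{i\le k}s_i\to0$, $s_{k+1}/\sum_{i\le k}s_i\to0$ (e.g. $s_k=k$), and choose indices $0=m_0<m_1<m_2<\cdots$ so sparse that $\bigl(\sum_{i\le m_k}s_i\bigr)\big/\bigl(\sum_{i\le m_{k+1}}s_i\bigr)\to0$. Let $\|\tau_{in}\|$ be the $(4\times\infty)$ stochastic matrix whose $n$-th column equals $\overline q$ when $m_k<n\le m_{k+1}$ with $k$ even and equals $\overline p$ when $k$ is odd, and let $\hat x$ be the corresponding block number. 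Each column satisfies the two defining relations, so $r_n(\hat x)\to\theta$ by Theorem \ref{teo1}, i.e. $\hat x\in S_\theta$. On the other hand, by the same estimates as in the proofs of Theorems \ref{teo1} and \ref{teo2} (block lengths, and the number of blocks, are negligible against the cumulative length), the relative frequency $v_i^{(n)}(\hat x)$ is, up to $o(1)$, the weighted average $\bigl(\sum_{j\le k}[\tau_{ij}s_j]\bigr)\big/\bigl(\sum_{j\le k}s_j\bigr)$; evaluated at $n$ corresponding to the end of the $k$-th phase, this average tends to $q_i$ along even $k$ and to $p_i$ along odd $k$, since the last phase dominates all earlier ones. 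As $p_i\neq q_i$, the sequence $v_i^{(n)}(\hat x)$ has two distinct subsequential limits, so $\nu_i(\hat x)$ does not exist; this holds for all $i\in\{0,1,2,3\}$ and $\hat x\in S_\theta$, whence $\hat x\in\Theta_3$. (This is the Cauchy-criterion argument of Lemma \ref{teo4} and Theorem \ref{teoanalog}, now run simultaneously for all four digits.) The one delicate point -- the only real difference from the $\Theta_2$ arguments -- is precisely forcing all four frequencies to fail while keeping $r_n\to\theta$: this is why $\overline p$ and $\overline q$ must lie on the mean-$\theta$ hyperplane yet differ in every coordinate, and why the whole column vector is switched as a unit, so that oscillation of the digit-$0$ average drags along genuine oscillations of the digit-$1$, digit-$2$ and digit-$3$ averages.

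\emph{Continuality and everywhere density.} For continuality, proceed as in the proof of Theorem \ref{teoanalog}: insert a free symbol $\varepsilon_i\in\{0,1\}$ in front of the $i$-th block of $\hat x$, producing $x(\varepsilon_i)$. One extra symbol per block changes every relative mean and every relative frequency by a quantity tending to $0$ (because $k/\sum_{i\le k}s_i\to0$), so $\lim_n r_n(x(\varepsilon_i))=\theta$ still holds and no $\nu_i(x(\varepsilon_i))$ exists, i.e. $x(\varepsilon_i)\in\Theta_3$; distinct sequences $(\varepsilon_i)\in\{0,1\}^{\mathbb N}$ give distinct numbers, so $\Theta_3$ has cardinality at least that of the continuum, while being a subset of $[0;1]$ it has at most that cardinality. (Alternatively, let $\overline p$ range over a continuum of admissible vectors, the breakpoints $m_k$ having been chosen independently of $\overline p$, and invoke Theorem \ref{teo3}.) Finally, $\Theta_3$ is everywhere dense by the same reasoning as for $\Theta_2$: its defining condition -- convergence of $r_n$ to $\theta$ together with non-existence of all four frequencies -- is unaffected by altering finitely many leading $4$-adic digits, and every interval $[a;b]$ contains a cylinder $[\Delta^4_{c_1\ldots c_r(0)};\Delta^4_{c_1\ldots c_r(3)}]$; prepending $c_1\ldots c_r$ to the expansion of any element of $\Theta_3$ yields an element of $\Theta_3$ inside $[a;b]$.
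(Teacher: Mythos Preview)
Your proof is correct. The overall architecture---block construction governed by a $(4\times\infty)$ matrix whose columns all lie on the mean-$\theta$ hyperplane, with Borel's theorem for the null-measure claim and the cylinder argument for density---matches the paper. The implementation differs in two places worth noting.

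For the construction of a point of $\Theta_3$, the paper applies Lemma~\ref{teo4} directly: it fixes $p_0>q_0>0$ and $p_1>q_1>0$, lets $\tau_{0k}=a_k(p_0,q_0)$, $\tau_{1k}=b_k(p_1,q_1)$ be the synchronised oscillating sequences produced by that lemma, and then \emph{solves} the linear system for $\tau_{2k},\tau_{3k}$; non-existence of $\nu_2,\nu_3$ is deduced afterwards from the non-existence of $\nu_0,\nu_1$ together with $r_n\to\theta$ (via Lemma~\ref{lem2}). You instead pick two full stochastic vectors $\overline p,\overline q$ on the $\theta$-hyperplane differing in \emph{every} coordinate and switch the entire column at breakpoints $m_k$ chosen so sparse that the last phase dominates; this makes the oscillation of all four relative frequencies visible directly and symmetrically, without the two-step ``oscillate two, deduce the other two'' structure. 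Your version is a little cleaner and sidesteps the paper's somewhat elliptical appeal at that point.

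For continuality, the paper varies the parameters $(p_0,q_0)$, $(p_1,q_1)$ over a continuum and invokes Theorem~\ref{teo3} to separate the resulting numbers; your main argument instead inserts a free bit $\varepsilon_i\in\{0,1\}$ in front of each block (the device from Theorem~\ref{teoanalog}), which also works since $k/\sum_{i\le k}s_i\to0$. Your parenthetical alternative is exactly the paper's route.
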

\begin{proof} \emph{Lebesgue measure.}  Since almost all (in the sense
  of Lebesgue measure) numbers of the interval $[0;1]$ are normal,
  i.e., $\nu_0=\nu_1=\nu_2=\nu_3=\dfrac{1}{4}$ \cite{Bor1} we see that
  Lebesgue measure of the set $\Theta_3$ is equal to zero.

  \emph{Continuality.} Let $s_k=k$, $p_0>q_0>0$, $p_1>q_1>0$. Suppose that
  all solutions of the system
$$
\left\{
 \begin{aligned}
     &x+y+z+t=1,\\
     &y+2z+3t=\theta,\\
     &x=p_0\vee q_0,\\
     &y=p_1\vee q_1
 \end{aligned}
\right.
$$
are positive.

It is obvious that $ \lim\limits_{k\to\infty}s_k=\infty $, $
\lim\limits_{k\to\infty}\displaystyle\frac{k}{\sum\limits^{k}_{i=1}s_i}=0
$ $
\lim\limits_{k\to\infty}\displaystyle\frac{s_{k+1}}{\sum\limits^{k}_{i=1}s_i}=0
$.  From Lemma \ref{teo4}, where $\alpha_1=p_0$, $\alpha_2=q_0$,
$\beta_1=p_1$, $\beta_2=q_1$ it follows that there exist sequences
$a_n(p_0,q_0)=p_0$ or $a_n(p_0,q_0)=q_0$ and $b_n(p_1,q_1)=p_1$ or
$b_n(p_1,q_1)=q_1$ such that for all $n\in N$ the limits
$$
\lim\limits_{k\to\infty}\displaystyle\frac{\sum\limits^{k}_{i=1}[a_i(p_0,q_0)s_i]}
                                          {\sum\limits^{k}_{i=1}s_i}
\quad \text{and}\quad
\lim\limits_{k\to\infty}\displaystyle\frac{\sum\limits^{k}_{i=1}[b_i(p_1,q_1)s_i]}
                                          {\sum\limits^{k}_{i=1}s_i}
$$
do not exist.

Denote
$\tau_{0k}=a_k(p_0,q_0)$, $\tau_{1k}=b_k(p_1,q_1)$.  From following system
$$
\left\{
\begin{aligned}
 &\tau_{0k}+\tau_{1k}+\tau_{2k}+\tau_{3k}=1,\\
 &\tau_{1k}+2\tau_{2k}+3\tau_{3k}=\theta
\end{aligned}
\right.
$$
we obtain $\tau_{2k}$, $\tau_{3k}$, i.e.,
$\tau_{3k}=\theta-2+2\tau_{0k}+\tau_{1k}$,
$\tau_{2k}=3-\theta-3\tau_{0k}-\tau_{1k}.$

Since the limits
$$
\lim\limits_{k\to\infty}\displaystyle\frac{N_0(x,\sum\limits^{k}_{i=1}s_i)}
                                          {\sum\limits^{k}_{i=1}s_i}=
\lim\limits_{k\to\infty}\displaystyle\frac{\sum\limits^{k}_{i=1}[a_i(p_0,q_0)s_i]}
                                          {\sum\limits^{k}_{i=1}s_i}
$$
and
$$
\lim\limits_{k\to\infty}\displaystyle\frac{N_1(x,\sum\limits^{k}_{i=1}s_i)}
                                          {\sum\limits^{k}_{i=1}s_i}=
\lim\limits_{k\to\infty}\displaystyle\frac{\sum\limits^{k}_{i=1}[b_i(p_1,q_1)s_i]}
                                          {\sum\limits^{k}_{i=1}s_i}
$$
do not exist, the frequencies $\nu_0(x)$ and $\nu_1(x)$ do not exist
either. Then from Theorem \ref{teo1} and from Theorem \ref{teo3} it
follows that $\lim\limits_{n\to\infty}r_n(x)=\theta$ and $\nu_2(x)$,
$\nu_3(x)$ do not exist.

From Theorem \ref{teo3} it follows that different numbers constructed
as indicated above correspond to different pairs $(p_0,q_0)$ and
$(p_1,q_1)$. Since the set of such pairs is a continuum, we obtain that set
$\Theta_3$ is a continuum.

\emph{Everywhere density.} Since the condition
$\lim\limits_{k\to\infty}r_k(x)=\theta$ does not depend on an
arbitrary finite group of first symbols, and for any interval
$[a;b]\subset[0;1]$ there exists a cylinder
$[\Delta^4_{\gamma_1\gamma_2\ldots\gamma_r(0)};\Delta^4_{\gamma_1\gamma_2\ldots\gamma_r(3)}]$
completely contained in it, we see that $\Theta_3$ is an everywhere dense
set.
\end{proof}

\begin{theorem}
  If $\theta\in(0;3)$, then the Hausdorff--Besicovitch dimension
  $\alpha_0(\Theta_3)$ of the set $\Theta_3$ is positive,
  i.e., $\alpha_0(\Theta_3)>0$.
\end{theorem}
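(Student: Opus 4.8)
The plan is to mimic, almost verbatim, the proof of Theorem~\ref{teoanalog}: produce a subset $C_1\subset\Theta_3$ with a self-similar block structure of period $k$ whose Hausdorff--Besicovitch dimension equals $\frac{1}{2k}$ for a sufficiently large block length $k$, and conclude $\alpha_0(\Theta_3)\geqslant\alpha_0(C_1)=\frac{1}{2k}>0$. The only point that genuinely changes relative to the $\Theta_2$ case is this: by the corollary to Lemmas~\ref{lem1} and~\ref{lem2}, a number $x\in S_\theta$ belongs to $\Theta_3$ exactly when it has \emph{no} digit frequency, so it no longer suffices (as it did for $\Theta_2$) to keep one frequency alive while spoiling the rest --- every $v^{(n)}_i$ must be forced to oscillate.

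Here is the construction. Since $\theta\in(0;3)$, the polytope $\{(\tau_0,\tau_1,\tau_2,\tau_3):\tau_i\geqslant0,\ \sum_i\tau_i=1,\ \tau_1+2\tau_2+3\tau_3=\theta\}$ has non-empty relative interior; pick two stochastic vectors $\overline{p}=(p_0,p_1,p_2,p_3)$ and $\overline{q}=(q_0,q_1,q_2,q_3)$ in it with $p_i>0$, $q_i>0$ and $p_i\neq q_i$ for \emph{every} $i\in\{0,1,2,3\}$ (the loci $p_i=q_i$ are of lower dimension, so such a pair exists). Fix a large integer $k$. For an arbitrary sequence $(\varepsilon_i)$ of zeros and ones form $x(\varepsilon_i)$ by concatenating blocks of length $k$, the $i$-th block being the free symbol $\varepsilon_i$ followed by runs $0^{x_i}1^{y_i}2^{z_i}3^{t_i}$, where $(x_i,y_i,z_i,t_i)$ solves a linear system of the shape of \eqref{2*}--\eqref{**}: the two constraints $x_i+y_i+z_i+t_i=k-1$ and $y_i+2z_i+3t_i=[\theta k(i+1)]-[\theta ki]-\varepsilon_i$ together with $x_i$ and $t_i$ pinned to the current regime ($\overline{p}$ on a first stretch of blocks, then $\overline{q}$, then $\overline{p}$, and so on). The right-hand sides being integers, $y_i$ and $z_i$ are automatically integers; absorbing $\varepsilon_i$ into the second constraint makes every block's digit sum equal to $[\theta k(i+1)]-[\theta ki]$ regardless of $(\varepsilon_i)$, so that --- telescoping and dividing by $n$ exactly as in Theorem~\ref{teoanalog} --- $\lim_{n\to\infty}r_n(x(\varepsilon_i))=\theta$ for every $(\varepsilon_i)$. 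For $k$ large all of $x_i,y_i,z_i,t_i$ are non-negative (checked as in Theorem~\ref{teoanalog}: $z_i/k=p_2+O(1/k)$, $y_i/k=p_1+O(1/k)$, and likewise with $\overline{q}$-data, the fractional errors absorbed by positivity of the coordinates). Finally choose the switch-times $r_1<r_2<r_3<\cdots$ growing so fast that the block-stretch ending at $r_j$ dominates all earlier ones; then for each digit $i$ the relative frequency $v^{(kr_j)}_i(x(\varepsilon_i))$ lies within a prescribed $\delta>0$ of $p_i$ for odd $j$ and of $q_i$ for even $j$. With $\delta<\frac13\min_i|p_i-q_i|$ (and $k$ large, so the $O(1/k)$ perturbation from the free symbols is harmless), the Cauchy criterion --- applied as in Theorem~\ref{teoanalog} --- shows that $\nu_i(x(\varepsilon_i))$ fails to exist for every $i$. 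Hence $x(\varepsilon_i)\in S_\theta$ has no digit frequency, i.e. $x(\varepsilon_i)\in\Theta_3$.

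For the dimension lower bound let $C_1$ be the union, over all $(\varepsilon_i)$, of the sets obtained from $x(\varepsilon_i)$ by permuting the digits inside arbitrarily many blocks (such permutations leave every $N_l(\cdot,kr)$ unchanged, hence keep us in $\Theta_3$), as in the proof of Theorem~\ref{teoanalog}. One may now either repeat the $4$-adic cylinder computation of that proof verbatim --- the $\alpha$-volume of the rank-$(kt-1)$ cylinder covering of $C_1$ is $2^{t-1}\,4^{-(kt-1)\alpha}$, the critical value is $\alpha=\frac{1}{2k}$, and the same estimate shows this covering is not improvable --- or, more transparently, place the $(\frac12,\frac12)$-Bernoulli measure $\mu$ on the free bits: a $4$-adic cylinder of rank $n$ has $\mu$-measure $\asymp 2^{-n/k}=(4^{-n})^{1/(2k)}$, comparable to its diameter to the power $\frac{1}{2k}$, so the mass distribution principle gives $\dim_H C_1\geqslant\frac{1}{2k}$. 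Either way, $\alpha_0(\Theta_3)\geqslant\alpha_0(C_1)=\frac{1}{2k}>0$. I expect the main obstacle to be the simultaneous balancing inside the construction: keeping each block's digit sum exactly on target (so $r_n\to\theta$ for \emph{all} $(\varepsilon_i)$), making all four coordinate-frequencies oscillate at once (which forces the two regime vectors to differ in every coordinate and the schedule to dominate), and still having non-negative run lengths --- these demands pull against one another, and fixing $\overline{p},\overline{q}$, $k$ and $(r_j)$ so that all three hold is the delicate part; after that, passing to the dimension estimate is a mechanical repetition of Theorem~\ref{teoanalog}.
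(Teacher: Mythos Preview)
Your proposal is correct and follows essentially the same route as the paper: build period-$k$ block numbers $x(\varepsilon_i)$ with a free bit per block, alternate two admissible frequency vectors on long stretches to kill every digit frequency while keeping the block digit-sums telescoping to force $r_n\to\theta$, and then repeat the $\alpha_0(C_1)=\tfrac{1}{2k}$ computation from Theorem~\ref{teoanalog}. The only cosmetic deviations are that you absorb $\varepsilon_i$ into the weighted-sum constraint (the paper splits it between the pinned coordinates via $r_i,\widetilde{r_i}$), you insist on $p_i\neq q_i$ for all $i$ so the oscillation of every $\nu_i$ is immediate rather than partly deduced from Lemma~\ref{lem2}, and you offer the mass-distribution principle as an alternative to the explicit covering estimate.
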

\begin{proof}
  Let $(\varepsilon_i)$ be an arbitrary sequence of zeros and ones,
  let $p^{(1)}_0>p^{(2)}_0>0$ and $p^{(1)}_1>p^{(2)}_1>0$, let
  solutions of the system
$$
\left\{
\begin{aligned}
       &x+y+z+t=1,\\
       &y+2z+3t=\theta,\\
       &x=p^{(1)}_0\vee p^{(2)}_0,\\
       &y=p^{(1)}_1\vee p^{(2)}_1
\end{aligned}
\right.
$$
be positive.

Denote
$$r_i=
\left\{
\begin{aligned}
 &0,~\text{if}~\varepsilon_i=1,\\
 &1,~\text{if}~\varepsilon_i=0,
\end{aligned}
\right.\qquad
\widetilde{r_i}=
\left\{
\begin{aligned}
 &0,~\text{if}~\varepsilon_i=0,\\
 &1,~\text{if}~\varepsilon_i=1.
\end{aligned}
\right.$$

Similarly to the proof of Theorem \ref{teoanalog}, we show
existence of a sufficiently large positive integer $k$ such that all
solutions of the  systems
  \begin{equation}
  \label{***}
  \left\{
    \begin{aligned}
       &x_i+y_i+z_i+t_i=k+1,\\
       &y_i+2z_i+3t_i=[\theta k(i+1)]-[\theta ki],\\
       &x_i=[p^{(1)}_0k(i+1)]-[p^{(1)}_0ki]-r_i,\\
       &t_i=[p^{(1)}_1k(i+1)]-[p^{(1)}_1ki]-\widetilde{r_i},
     \end{aligned}
\right.
\end{equation}
\begin{equation}
  \label{****}
  \left\{
    \begin{aligned}
       &x_i+y_i+z_i+t_i=k+1,\\
       &y_i+2z_i+3t_i=[\theta k(i+1)]-[\theta ki],\\
       &x_i=[p^{(2)}_0k(i+1)]-[p^{(2)}_0ki]-r_i,\\
       &t_i=[p^{(2)}_1k(i+1)]-[p^{(2)}_1ki]-\widetilde{r_i}
     \end{aligned}
\right.
\end{equation}
are positive for all $i\in N$.

Let $(\varepsilon_i)$ be a fixed sequence of zeros and ones. Construct a number $x(\varepsilon_i)$ as follows:\\
$$
x(\varepsilon_i)=\Delta^4_{\underbrace{\varepsilon_1
                                       \underbrace{0\ldots0}_{x_1}
                                       \underbrace{1\ldots1}_{y_1}
                                       \underbrace{2\ldots2}_{z_1}
                                       \underbrace{3\ldots3}_{t_1}}_{\text{k symbols}}\ldots
                           \underbrace{\varepsilon_j
                                       \underbrace{0\ldots0}_{x_j}
                                       \underbrace{1\ldots1}_{y_j}
                                       \underbrace{2\ldots2}_{z_j}
                                       \underbrace{3\ldots3}_{t_j}}_{\text{k symbols}}\ldots}
$$
Let $\delta>0$ be such that $p^{(1)}_0-\delta>p^{(2)}_0+\delta$, $p^{(1)}_1-\delta>p^{(2)}_1+\delta$.

Let $g_1$ be a positive integer such that $(x_j,y_j,z_j,t_j)$ is
a solution of system (4) for any $j\in\{1,2,\ldots,g_1\}$ and
$\displaystyle\frac{N_0(x,kg_1)}{kg_1}>p^{(1)}_0-\delta$,
$\displaystyle\frac{N_1(x,kg_1)}{kg_1}>p^{(1)}_1-\delta$.

Let $g_2$ be a positive integer such that $(x_j,y_j,z_j,t_j)$ is
a solution of system (5) for all $j\in\{g_1+1,g_1+2,\ldots,g_2\}$ and
$\displaystyle\frac{N_0(x,kg_2)}{kg_2}<p^{(2)}_0+\delta$,
$\displaystyle\frac{N_1(x,kg_2)}{kg_2}<p^{(2)}_1+\delta$.

Let $g_3$ be a positive integer such that $(x_j,y_j,z_j,t_j)$ is
a solution of system (4) for any $j\in\{g_2+1,g_2+2,\ldots,g_3\}$ and
$\displaystyle\frac{N_0(x,kg_3)}{kg_3}>p^{(1)}_0-\delta$,
$\displaystyle\frac{N_1(x,kg_3)}{kg_3}>p^{(1)}_1-\delta$.  And so on.

Since 
$$\left|\displaystyle\frac{N_a(x,kg_{j+1})}{kg_{j+1}}-
  \displaystyle\frac{N_a(x,kg_j)}{kg_j}
\right|>p^{(1)}_a-p^{(2)}_a-2\delta$$ for all $j\in N$,
the limits
$\lim\limits_{j\to\infty}\displaystyle\frac{N_a(x,kg_j)}{kg_j}$,
$a\in\{0,1\}$ do not exist (assuming the converse, we obtain
a contradiction to the Cauchy criterion). Thus, $\nu_0(x(\varepsilon_i))$
and $\nu_1(x(\varepsilon_i))$ do not exist.

Let 
$$
\begin{aligned}
& kj\leqslant n<k(j+1), \\
& r_n\geqslant
       \displaystyle\frac{[\theta k(j+1)]-[\theta k]}{k(j+1)}=
       \theta-\displaystyle\frac{\{\theta k(j+1)\}+[\theta k]}{k(j+1)}\to
       \theta,
\\
& r_n\leqslant
       \displaystyle\frac{[\theta k(j+2)]-[\theta k]}{kj}=
       \theta\cdot\displaystyle\frac{j+2}{j}-\frac{\{\theta k(j+2)\}-[\theta k]}{kj}\to
       \theta \quad (j\to\infty).
\end{aligned}
$$
Hence, $r_n\to\theta$ as $n\to\infty$ and from Theorem \ref{teo3}, it
follows that the frequencies $\nu_2(x(\varepsilon_i))$ and
$\nu_3(x(\varepsilon_i))$ do not exist, i.e., $x(\varepsilon_i)\in
\Theta_3$.

Selecting an arbitrary quantity of (not necessarily consecutive)
blocks of number $x(\varepsilon_i)$ and changing the order of digits
inside each block (except for $\varepsilon_{i}$) we obtain either the
``old" number $x(\varepsilon_i)$, or a new number
$\widetilde{x}(\varepsilon_i)$. These numbers are contained in
$\Theta_3$ since
$N_l(x(\varepsilon_i),kr)=N_l(\widetilde{x}(\varepsilon_i),kr)$, for
any $r\in N$ and $l\in\{0,1,2,3\}$. Denote by $C(x(\varepsilon_i))$
the set of numbers $\widetilde{x}(\varepsilon_i)$ obtained from
$x(\varepsilon_i)$ by choosing an arbitrary number of blocks and changing
digit order inside them. It is evident that the set is a
continuum. Denote by $C_1$ a union of the sets $C(x(\varepsilon_i))$ of all
possible sequences $(\varepsilon_i)$ and show that
$\alpha_0(C_1)=\frac{1}{2k}$. Similarly to the proof of
Theorem~\ref{teoanalog}, we show that
$\alpha_0(C_1)=\displaystyle\frac{1}{2k}$.

Thus, $\alpha_0(\Theta_3)\geqslant\alpha_0(C_1)>0$.
\end{proof}

\end{document}